\documentclass[10pt]{article}
\usepackage{amsfonts,amssymb,amsmath,amsthm,mathtools}
\usepackage{array,booktabs,longtable}
\usepackage{graphicx}
\usepackage{float}
\usepackage{tikz}
\usetikzlibrary{positioning}
\usepackage{caption}
\usepackage[a4paper,scale=0.8]{geometry}
\usepackage{enumitem}
\usepackage[numbers,sort&compress]{natbib}
\usepackage{hyperref}
\usepackage{cleveref}

\newtheorem{theorem}{Theorem}[section]

\newtheorem{lemma}{Lemma}[section]
\newtheorem{proposition}{Proposition}[section]

\theoremstyle{definition}
\newtheorem{definition}{Definition}[section]

\theoremstyle{remark}

\graphicspath{{Figures/}{output/pdf/Figures/}}
\makeatletter
\def\input@path{{./}{output/pdf/}}
\makeatother
\hypersetup{
    colorlinks=true,
    citecolor=cyan,
    linkcolor=cyan,
    filecolor=magenta,
    urlcolor=cyan,}

\newcommand{\be}{\begin{equation}}
\newcommand{\ee}{\end{equation}}

\crefname{figure}{fig.}{figures}
\crefname{table}{table}{tables}
\begin{document}
\title{The game chromatic number of generalized Mycielski graphs of paths and cycles}
\author{
    Yushuang Mou$^{\textnormal{a}}$, Qiang Sun$^{\textnormal{b}}$, Chao Zhang$^{\textnormal{a}}$%
    \\
    {\it \small $^{\textnormal{a}}$ State Key Laboratory of Public Big Data, School of Mathematics and Statistics,} \\
    {\it \small Guizhou University, Guiyang 550025, China} \\
    {\it \small $^{\textnormal{b}}$ School of Mathematical Science, Yangzhou University, Yangzhou 225009, China}
}

\date{}
\maketitle{}
\begingroup
\renewcommand{\thefootnote}{}
\footnotetext{\emph{E-mail addresses}: \protect\href{mailto:mys6875@163.com}{mys6875@163.com} (Y. Mou), \protect\href{mailto:qsun1987@163.com}{qsun1987@163.com} (Q. Sun), \protect\href{mailto:zhangc@amss.ac.cn}{zhangc@amss.ac.cn} (C. Zhang).}
\endgroup

\begin{abstract}
The graph coloring game is a two-player game in which the players alternately color an uncolored vertex of a graph $G$. The game chromatic number is the minimum number of colors needed for the first player to guarantee a win. We investigate this parameter for generalized Mycielski graphs $M_k(G)$, where $G$ is a path $P_n$ or a cycle $C_n$ with $n$ vertices. For every $k\geq2$ and $n\geq5$, we establish $4\leq\chi_g\bigl(M_k(P_n)\bigr)\leq5$ and
$4\leq\chi_g\bigl(M_k(C_n)\bigr)\leq5$. We also determine the exact values $\chi_g\bigl(M_2(P_5)\bigr)=\chi_g\bigl(M_2(P_6)\bigr)=4$. The proofs of the lower bounds use a configuration in which Bob can create two threats simultaneously, while the four-color upper bounds in the two exact cases are proved using the double-doctor lemma. Thus the number of layers and the order of the base graph may grow, but the game chromatic number remains bounded by five.
\end{abstract}

\noindent{\bf Keywords} graph coloring game; game chromatic number;
generalized Mycielski graphs

\vspace{0.5em}
\noindent{\bf Mathematics Subject Classification} 05C15, 05C57

%=========================================================
\section{Introduction}
%=========================================================

Given a simple graph $G$, the \emph{graph coloring game} is played by Alice and Bob, who alternately color an uncolored vertex of $G$ from a color set $X$. Each move must preserve a proper coloring; that is, adjacent colored vertices must receive different colors. The game begins with all vertices uncolored, and Alice moves first. Alice wins if every vertex is eventually colored; Bob wins if an uncolored vertex has no legal color.

The \emph{game chromatic number} $\chi_g(G)$, introduced by Bodlaender \cite{Bodlaender1991}, is the minimum number of colors for which Alice has a winning strategy. It extends the classical chromatic number and satisfies
\[
 \chi(G)\leq\chi_g(G)\leq\Delta(G)+1,
\]
where $\Delta(G)$ is the maximum degree of $G$. Whereas $\chi(G)$ records
whether a proper coloring exists, $\chi_g(G)$ reflects whether a proper coloring can be completed in the presence of adversarial choices. The difference between these two requirements can be substantial. Even among bipartite graphs, whose chromatic number is two, the game chromatic number is unbounded. If $M$ is a perfect matching of $K_{n,n}$, then $\chi_g(K_{n,n}-M)=n$~\cite{Bodlaender1991}. Moreover, deciding whether Alice has a winning strategy for a prescribed set of colors is \textsc{pspace}-complete~\cite{Costa2019}. Thus the game chromatic number records features of the evolving partial coloring that are not detected by the existence of a final proper coloring.

A central line of research seeks bounds that reflect the structure of the underlying graph. For example, $\chi_g(T)\leq4$ for every forest $T$ \cite{Faigle1993}, planar graphs have game chromatic number at most $17$ \cite{Zhu2008}, and outerplanar graphs have game chromatic number at
most $7$ \cite{GuanZhu1999}. Further developments have considered graph products, random graphs, and several other graph families \cite{Bartnicki2007,Enomoto2023,Hollom2025,Laso2017,Frieze2013,
Zhuproduct2008}. These results show that restrictions on the ordinary chromatic number, the maximum degree, or the presence of cycles do not by themselves determine the outcome of the game. A successful strategy must also control the order in which partial colorings arise and prevent several threats from becoming active at the same time.

The construction introduced by Mycielski \cite{Mycielski1955} increases the chromatic number without increasing the clique number. Starting with $K_2$ and applying the construction repeatedly produces graphs containing no triangles and having arbitrarily large chromatic number. Mycielski graphs therefore frequently serve as counterexamples to proposed bounds based only on clique size, the absence of triangles, or other local conditions. Beyond ordinary coloring, their fractional and circular chromatic numbers have been investigated in \cite{Larsen1995,Chang1999,Fan2004,Liu2004}.

Lam et al.~\cite{Lam2003} introduced generalized Mycielski graphs in their study of circular chromatic numbers. The generalized construction allows an arbitrary number of layers to be inserted before the root, and the case $k=1$ gives the usual Mycielski construction. The base graph is retained as an induced subgraph, while the number of layers can vary independently of the base graph. Consequently, this construction makes it possible to compare the behavior of a graph parameter as the number of layers varies while the structure of the base graph remains fixed.

Several structural and coloring parameters of generalized Mycielski graphs have subsequently been investigated. Lin et al.~\cite{Lin2006} examined their circular clique numbers, domination and packing
parameters, vertex cover numbers, spectra, and biclique partition numbers. Total coloring and backbone coloring were considered in \cite{Miskuf2009,Chen2014}, respectively. In contrast, comparatively little is known about their behavior in the graph coloring game. For $k=1$, the game chromatic numbers of Mycielski graphs have been studied for several base families, including complete graphs, complete
bipartite graphs, paths and cycles \cite{Chamberlin2019,Alagammai2019}. When additional layers are present, a vertex in an internal layer may have neighbors in both adjacent layers, while the root is adjacent only to vertices in the final layer. A move may therefore affect possible threats at several locations without changing the maximum degree of the non-root vertices. Hence the strategies developed for $k=1$ cannot be transferred directly to the generalized construction.

Paths and cycles provide a natural setting in which these effects can be distinguished. Both have maximum degree two, but a path has boundary vertices, whereas a cycle has a periodic structure. After the generalized Mycielski construction is applied, every non-root vertex has degree at most four, independently of the number of layers and the order of the base graph. The influence of the layers can therefore be examined without a corresponding increase in the local degree. Comparing the two families also reveals how boundary vertices and periodic structure affect the available coloring strategies.

A bound independent of both the number of layers and the order of the base graph separates the influence of the layered structure from that of increasing vertex degrees. Exact values further identify the local
configurations that permit Alice to improve such a bound. Treating $M_k(P_n)$ and $M_k(C_n)$ within the same framework makes it possible to determine whether these configurations depend on the boundary of a path or persist when the same local neighborhoods are arranged cyclically.

In this work, we investigate the game chromatic numbers of generalized Mycielski graphs obtained from paths and cycles. For every $k\geq2$, we determine the exact game chromatic numbers for the paths $P_2$, $P_3$, and $P_4$, and for the cycle $C_4$. We also establish
\[
 4\leq\chi_g(M_k(C_3))\leq5
\]
and, for $n\geq5$,
\[
 4\leq\chi_g(M_k(P_n))\leq5
 \qquad\text{and}\qquad
 4\leq\chi_g(M_k(C_n))\leq5.
\]
In particular, these bounds do not depend on the number of layers. We further prove
\[
 \chi_g(M_2(P_5))=\chi_g(M_2(P_6))=4.
\]
The lower bounds are obtained from local threat configurations, while coloring the root first gives a uniform five-color strategy. The two exact four-color results are proved by means of the double-doctor lemma. Thus the general bounds describe the behavior of the two graph families as their order and number of layers increase, while the exact cases identify additional local structure that permits the uniform upper bound to be reduced from five to four.

The remainder of the paper is organized as follows.
Section~\ref{sec:preliminaries} recalls the generalized Mycielski construction, fixes the notation used throughout the paper, and presents the definitions and known results needed later.
Section~\ref{sec:general-tools} establishes general bounds and structural lemmas that apply to both paths and cycles.
Section~\ref{sec:paths} studies generalized Mycielski graphs obtained from paths and includes the exact results for $M_2(P_5)$ and $M_2(P_6)$.
Section~\ref{sec:cycles} considers generalized Mycielski graphs obtained from cycles.

\allowdisplaybreaks

\section{Preliminaries}
\label{sec:preliminaries}

In this section, we recall the definition of the generalized Mycielski construction and fix the notation used throughout the paper. We then recall the known results for $k=1$ and state the definitions of partial colorings and safe vertices.
\subsection{Generalized Mycielski graphs and notation}
\label{subsec:mycielski-notation}
Let $G$ be a graph with vertex set $V^0= \{v_i^0: 1\leq i \leq n\}$ and edge set $E^0$. For any integer $k \geq 1$, the \emph{generalized Mycielski graph} $M_k(G)$ of $G$ is formed by stacking $k$ copies of $V(G)$, denoted $V^1, V^2, \dots, V^k$, and adding an extra vertex $\omega$. Specifically, the vertex set of $M_k(G)$ is

\begin{equation*}
V^0 \cup V^1 \cup V^2 \cup \dots \cup V^k \cup \{\omega\},
\end{equation*}
where each $V^m= \{v_i^m: v_i^0 \in V^0\}$ is the $m$-th copy of $V^0$, for $m=1,2,\dots,k$. The edge set of $M_k(G)$ is

\begin{equation*}
E^0 \cup \bigcup_{m=0}^{k-1}
\bigl\{v_i^m v_j^{m+1},v_j^m v_i^{m+1}:
v_i^0v_j^0\in E^0\bigr\}
\cup \{v_i^k\omega:v_i^k\in V^k\}.
\end{equation*}
Intuitively, $M_k(G)$ stacks $k$ copies of $V(G)$, connects each layer to the next, and adds a root $ \omega $ that is adjacent to every vertex in the final layer $V^k$ and to no other vertex. When $k=1$, $M_1(G)$ reduces to the classical \emph{Mycielski graph} $M(G)$. For example, the graphs $M(P_5)$ and $M(C_5)$ are illustrated in Fig.~\ref{fig1}, while the graphs $M_3(P_5)$ and $M_3(C_5)$ are shown in Fig.~\ref{fig2}.

\begin{figure}[htbp]
\centering
\begin{minipage}[b]{0.45\textwidth}
\centering
\includegraphics[scale=0.9]{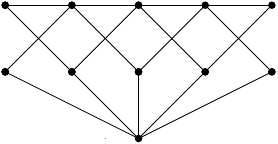}
 \end{minipage}
     \hspace{0.05em}
    \begin{minipage}[b]{0.45\textwidth}
\centering
\includegraphics[scale=0.9]{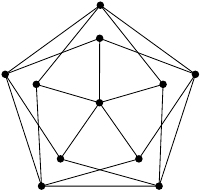}
 \end{minipage}
 \caption{The graphs $M(P_5)$ and $M(C_5)$.}
 \label{fig1}
\end{figure}

\begin{figure}[htbp]
\centering
\begin{minipage}[b]{0.45\textwidth}
\centering
\includegraphics[scale=0.9]{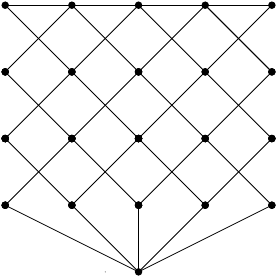}
 \end{minipage}
     \hspace{0.05em}
    \begin{minipage}[b]{0.45\textwidth}
\centering
\includegraphics[scale=0.9]{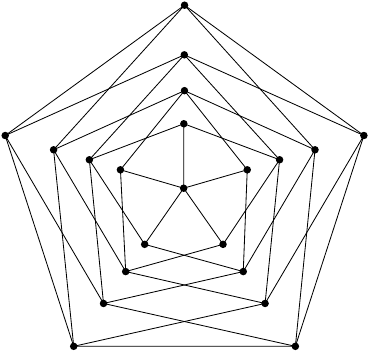}
 \end{minipage}
 \caption{The graphs $M_3(P_5)$ and $M_3(C_5)$.}
 \label{fig2}
\end{figure}

\subsection{Known results and preliminary definitions}
\label{subsec:game-background}
It is known that $\chi_g(P_n)=2$ for $2\leq n\leq3$ and $\chi_g(P_n)=3$ for $n\geq4$, and $\chi_g(C_n)=3$ for $n\geq 3$ \cite{Bodlaender1991,Alagammai2019}. We first recall the known exact values for the game chromatic numbers of the classical Mycielski graphs of paths and cycles.

\begin{proposition}[Alagammai and Vijayalakshmi~\cite{Alagammai2019}]
The game chromatic numbers of the Mycielski graphs of paths and cycles
are given as follows:
\begin{enumerate}[label={\upshape(\arabic*)},
                  nosep,
                  leftmargin=*]
\item For paths,
\[
\chi_g\bigl(M(P_n)\bigr)=
\begin{cases}
3, & 2\leq n\leq4,\\
4, & n\geq5.
\end{cases}
\]

\item For cycles,
\[
\chi_g\bigl(M(C_n)\bigr)=
\begin{cases}
3, & n=4,\\
4, & n=3\text{ or }n\geq5.
\end{cases}
\]
\end{enumerate}
\end{proposition}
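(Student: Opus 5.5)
This proposition is quoted from Alagammai and Vijayalakshmi~\cite{Alagammai2019}, so the paper simply cites it. If I were to reprove it, I would prove an upper bound and a matching lower bound in each case, describing the graph explicitly. For $k=1$, $M(G)$ consists of the base layer $V^0$, a shadow layer $V^1$ in which $v_i^1$ is adjacent to the base neighbours of $v_i^0$, and the root $\omega$ adjacent to all of $V^1$. For paths and cycles, each base vertex has degree at most $4$ and each shadow vertex has degree at most $3$.

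\textbf{Upper bounds.}
\begin{itemize}
\item For the four-colour upper bounds ($n\ge5$, and $C_3$), Alice first colours $\omega$. Then every shadow vertex has at most two further neighbours, both in $V^0$, so it always keeps a free colour among the remaining three. The remaining risk comes from base vertices of degree $4$. For these I would have Alice use a response strategy that, whenever Bob colours a neighbour of an uncoloured base vertex, either colours that base vertex or reuses a colour already present in its neighbourhood. A base vertex becomes dangerous only after all four of its neighbours receive distinct colours. Since Alice answers every Bob move, I would argue that she can always repair the single threat Bob creates.
\item For the three-colour upper bounds ($P_2,P_3,P_4$ and $C_4$), the graphs are tiny (at most $9$ vertices). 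Here I would give explicit Alice strategies, e.g.\ colouring $\omega$ or a central base vertex first, followed by a short case analysis on Bob's reply.
\end{itemize}

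\textbf{Lower bounds.}
\begin{itemize}
\item $\chi_g\ge3$ in every case, because $M(P_n)$ for $n\ge2$ and $M(C_n)$ contain odd cycles; for example, $v_1^0v_2^0v_1^1\omega v_2^1$ closes a $5$-cycle through the shadows.
\item For the value $4$ when $n\ge5$, I would exhibit a Bob strategy with three colours that produces two threats at once. Bob colours shadow vertices $v_{i-1}^1$ and $v_{i+1}^1$, together with suitable base vertices, with distinct colours around an interior base vertex $v_i^0$ and a second vertex $\omega$ or $v_j^0$. Alice can then fix only one of them.
\item For $C_3$, $M(C_3)$ has chromatic number $4$ (it is the Grötzsch-type construction on a triangle), so the bound is immediate.
\end{itemize}

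The main obstacle is the Bob strategy for $n\ge5$. It must handle every first move of Alice, in particular her colouring $\omega$, which spoils threats on the shadow layer. I would resolve this by choosing the threatened vertices among the interior base vertices $v_i^0$ with $2\le i\le n-1$. There are at least three such vertices, which leaves enough room for Bob to work far from wherever Alice has played.
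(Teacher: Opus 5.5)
Since the paper only quotes this proposition from \cite{Alagammai2019}, your sketch has to stand on its own. It does not: in both nontrivial directions it announces a strategy without supplying one, and the heuristics it offers are wrong.

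\textbf{Four-colour upper bound.} A degree-four base vertex $v_i^0$ is in danger as soon as three of its neighbours carry three distinct colours while the fourth is uncoloured and can take the missing colour. Once all four neighbours are distinct, the vertex is already lost. More importantly, Bob does not create one threat per move. Both $v_j^0$ and $v_j^1$ lie in $N(v_{j-1}^0)\cap N(v_{j+1}^0)$, so a single move of Bob can create threats at $v_{j-1}^0$ and $v_{j+1}^0$ at once. The attack vertices can be, for instance, $v_{j-2}^1$ and $v_{j+2}^1$, whose closed neighbourhoods in $M(P_n)$ meet only in the already coloured root. No reply of Alice repairs both, so ``answering every Bob move'' is not enough.

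What is missing is an invariant, maintained after each of Alice's moves, that excludes such positions. A natural candidate uses the non-adjacent columns $\{v_j^0,v_j^1\}$; note that $N(v_i^0)$ is exactly the union of columns $i-1$ and $i+1$. Alice completes the column Bob has just entered by copying his colour. While copying is legal, every column is monochromatic and no uncoloured base vertex sees more than two colours. Copying fails in two cases:
\begin{itemize}
\item Bob puts the root colour on some $v_j^0$;
\item later, Bob colours $v_j^1$ with the colour already sitting on $v_{j\pm1}^1$ in a neighbouring two-coloured column.
\end{itemize}
Handling these cases is the real content of the upper bound, and your sketch does not address them.

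\textbf{Lower bound.} The plan has the mirror-image defect. Colouring two shadow vertices ``together with suitable base vertices'' takes several Bob moves, and Alice defuses each single threat as it appears. With three colours, the double threat must come from one move of Bob, so each target must already see a colour before that move. At the start of the game, that colour is Alice's.

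Your remedy for Alice's opening at $\omega$ therefore points the wrong way. At that moment no base vertex has a coloured neighbour, so no base vertex can be a target after Bob's reply, and working far from Alice throws her colour away. In the three-colour game, colouring $\omega$ makes the shadow vertices vulnerable rather than protecting them. In $M(P_n)$ with $n\ge5$, Bob colours $v_3^0$ with a second colour. Then $v_2^1$ and $v_4^1$ both see two colours, their only uncoloured neighbours $v_1^0$ and $v_5^0$ have disjoint closed neighbourhoods, and Lemma~\ref{lem:separated-double-threat} applies.

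Alice's other openings call for the same idea: a $4$-cycle through her vertex, with Bob colouring the opposite vertex, as in the paper's Type~1 configurations for $k\ge2$. For $k=1$, however, disjointness must be rechecked near the path ends and for short cycles. In $M(C_6)$, for example, the attack vertices $v_{i-2}^0$ and $v_{i+2}^0$ share the neighbour $v_{i+3}^0$. Alice defuses both threats by colouring it with the third colour, so that case needs a longer Bob strategy (compare the paper's separate treatment of $n=6$ for $k\ge2$).

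\textbf{Three-colour upper bounds.} These are only announced in your sketch. The twin-copying strategies the paper uses for $M_k(P_3)$, $M_k(P_4)$ and $M_k(C_4)$ carry over to $k=1$ and would close this part.
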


For the remainder of the paper, $k\geq2$. We now consider $M_k(P_n)$ and $M_k(C_n)$. Let $\mathcal{M}_k$ denote the family consisting of these graphs, and let $X$ be the color set. For a vertex $v$, write $N(v)$ and $N[v]=N(v)\cup\{v\}$ for its open and closed neighborhoods, respectively, and write $\deg(v)=|N(v)|$. Every non-root vertex of a graph in $\mathcal{M}_k$ has degree at most four.

We next fix the notation used in the subsequent proofs.

\begin{definition}
A \emph{partial proper coloring} of a graph $G$ with color set $X$ is a map
\[
 \varphi:S\longrightarrow X,
 \qquad S\subseteq V(G),
\]
such that $\varphi(u)\ne\varphi(v)$ whenever $u,v\in S$ and $uv\in E(G)$.  The vertices in $S$ are colored and those in $V(G)\setminus S$ are uncolored. For an uncolored vertex $v$, put
\[
 C_\varphi(v)=\{\varphi(z):z\in N(v)\cap S\},
 \qquad L_\varphi(v)=X\setminus C_\varphi(v).
\]
The elements of $L_\varphi(v)$ are the \emph{legal colors} at $v$.
\end{definition}

The following notion of safety will be used throughout the paper.

\begin{definition}
\label{def:safe}
Let $G\in\mathcal{M}_k$, let $X$ be a color set with $|X|=4$, and let $\varphi$ be the current partial proper coloring. A vertex $v\in V(G)\setminus\{\omega\}$ is called \emph{safe} if at least one of the following conditions holds:
\begin{enumerate}[label={\upshape(\roman*)},leftmargin=*]
 \item $v$ is already colored;
 \item $\deg(v)\leq3$;
 \item two colored neighbors of $v$ have the same color under $\varphi$;
 \item exactly three neighbors of $v$ are colored with distinct colors,
 the remaining neighbor $u$ is uncolored, and $u$ has a colored neighbor $w$ such that $\varphi(w)$ is the fourth color.
\end{enumerate}
An uncolored vertex of degree four that satisfies none of these conditions is called \emph{unsafe}.
\end{definition}

Every safe vertex remains safe throughout the game. Indeed, this is immediate for an already colored vertex, and a vertex of degree at most three can never see all four colors. If two colored neighbors of $v$ have the same color, then at most three colors can occur in $N(v)$. Finally, suppose that condition~(iv) holds. The fourth color is unavailable at $u$ because it already occurs in $N(u)$. Hence, if $u$ is colored later, it must receive one of the three colors already present in $N(v)$, after which $v$ satisfies condition~(iii). Consequently, a safe vertex can never become uncolorable.

\section{General bounds and structural lemmas}
\label{sec:general-tools}

In this section, we establish several general lemmas that provide lower and upper bounds for generalized Mycielski graphs of paths and cycles. We first describe the configurations that yield lower bounds. We then prove a uniform upper bound of five and conclude with the double-doctor lemma, which is used to obtain the sharp upper bound of four in two path cases.

\subsection{Threat configurations and lemmas for lower bounds}
\label{subsec:threat-configurations}
For the three-color lower bounds, we use the following notion of a threat.

\begin{definition}
Let $t$ and $x$ be uncolored vertices with $x\in N(t)$.  The ordered pair $(t,x)$ is called an $a$-\emph{threat} if
\[
 C_\varphi(t)=X\setminus\{a\}
 \qquad\text{and}\qquad
 a\in L_\varphi(x).
\]
Here $t$ is the \emph{target}, $x$ is the \emph{attack vertex}, and $a$ is the \emph{threat color}. Thus $a$ is the unique legal color at $t$ and is also legal at $x$.
\end{definition}

If $(t,x)$ is an $a$-threat on Bob's turn, then Bob can color $x$ with $a$, after which $t$ has no legal color. In particular, the notation $(t_1,x_1)$ and $(t_2,x_2)$ below refers to two ordered pairs, not to graph edges considered in isolation.

\begin{lemma}[Double-threat lemma]
\label{lem:separated-double-threat}
Suppose that it is Alice's turn in a three-coloring game and that $(t_1,x_1)$ and $(t_2,x_2)$ are threats. Let $U$ be the set of currently uncolored vertices. If
\[
 \bigl(N[x_1]\cap U\bigr)\cap\bigl(N[x_2]\cap U\bigr)=\varnothing,
\]
then Bob has a winning strategy.
\end{lemma}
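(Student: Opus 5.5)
Let the threats be an $a_1$-threat $(t_1,x_1)$ and an $a_2$-threat $(t_2,x_2)$. Alice has one move, which colors a single vertex $y\in U$ with a legal color. I will show that after any such move at least one of the two threats is still a threat on Bob's turn. Bob then colors the attack vertex of that threat with its threat color, and the corresponding target has no legal color, so Bob wins. The whole argument is a case analysis on how Alice's move can interact with a threat $(t,x)$ with threat color $a$.

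\textbf{Step 1: what destroys a single threat.} First note that coloring a vertex never enlarges a set $L_\varphi(\cdot)$. So a threat $(t,x)$ survives Alice's move unless one of the following happens:
\begin{enumerate}[label={\upshape(\alph*)},leftmargin=*]
\item $t$ or $x$ becomes colored;
\item $a$ disappears from $L_\varphi(x)$, i.e.\ Alice colors some neighbor of $x$ with $a$;
\item $C_\varphi(t)$ changes.
\end{enumerate}
Case (c) needs care. Since $C_\varphi(t)=X\setminus\{a\}$, the only color that can newly appear at $t$ is $a$, which would kill $t$ outright and give Bob the win anyway. Alternatively, a neighbor of $t$ is colored with a color already present, in which case $C_\varphi(t)$ is unchanged. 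Note also that Alice cannot legally color a neighbor of $t$ with $a$ without making $t$ uncolorable, and that is already a loss for her. Hence the only genuinely effective moves are: coloring $t$ (with $a$), coloring $x$, or coloring an uncolored neighbor of $x$ with $a$. Each of these colors a vertex of $N[x]\cap U$, except for coloring $t$ itself. But $t\in N(x)$ and $t$ is uncolored, so $t\in N[x]\cap U$ as well.

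\textbf{Step 2: the disjointness hypothesis.} By Step 1, to neutralize $(t_i,x_i)$ Alice must color a vertex of $N[x_i]\cap U$. These two sets are disjoint by hypothesis, so her single colored vertex $y$ lies in at most one of them, and the other threat, say $(t_j,x_j)$, survives. I also need $t_j$ and $x_j$ to remain uncolored, which holds because both lie in $N[x_j]\cap U$ and $y$ is not in that set. Bob then colors $x_j$ with $a_j$, which is legal because $a_j\in L_\varphi(x_j)$. The target $t_j$ then sees all three colors and cannot be colored, so Bob wins.

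The main obstacle is making Step 1 airtight. I must check that coloring a neighbor of $t_j$ outside $N[x_j]$ cannot remove the threat. This follows because any legal color Alice puts there either repeats a color already in $C_\varphi(t_j)=X\setminus\{a_j\}$, or equals $a_j$ and immediately leaves $t_j$ without a legal color.
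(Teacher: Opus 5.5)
Your proof is correct and follows the paper's own argument. Alice's single colored vertex lies in at most one of the disjoint regions $N[x_i]\cap U$, so for the other index $j$ she colors neither $t_j$ nor $x_j$ nor any neighbor of $x_j$; this uses $t_j\in N(x_j)\cap U$, and coloring a neighbor of $t_j$ with $a_j$ already leaves $t_j$ uncolorable. Bob then colors $x_j$ with $a_j$, and your Step~1 simply spells out the case analysis that the paper compresses into one sentence.
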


\begin{proof}
Let Alice color a vertex $z$. The two displayed response regions are disjoint, so $z$ lies in at most one of them. Hence, for some $i\in\{1,2\}$, Alice's move colors neither $t_i$ nor $x_i$ and does not assign the missing color of the threat to a neighbor of $x_i$. If Alice's move has already made an uncolored vertex uncolorable, Bob has won. Otherwise $(t_i,x_i)$ remains a threat, and Bob colors $x_i$ with its missing color. The vertex $t_i$ then has no legal color.
\end{proof}

The double-threat lemma applies when the two threats arise from a common cycle of length four. We describe this configuration in the following definition.
\begin{definition}
A \emph{Type~1 configuration} after Alice's first move consists of a $4$-cycle
\[
 u_1u_2u_3u_4u_1
\]
and two vertices $x_1,x_3$ such that $u_4$ is the only colored vertex, $u_1,u_2,u_3,x_1,x_3$ are uncolored,
\[
 x_1\in N(u_1)\setminus\{u_2,u_4\},\qquad
 x_3\in N(u_3)\setminus\{u_2,u_4\},
\]
and, after Bob colors $u_2$ with a color different from $\varphi(u_4)$,
\[
 \bigl(N[x_1]\cap U\bigr)\cap
 \bigl(N[x_3]\cap U\bigr)=\varnothing.
\]
Here $U$ denotes the set of vertices that remain uncolored after Bob's move.
\end{definition}

A Type~1 configuration is illustrated in Fig.~\ref{fig:type1-configuration}.

\begin{figure}[htbp]
\centering
\begin{tikzpicture}[
  x=1cm,y=1cm,
  vertex/.style={circle,draw,minimum size=7mm,inner sep=0pt,font=\small},
  alice/.style={vertex,fill=black!18},
  bob/.style={vertex,fill=black!42},
  attack/.style={vertex,dashed},
  every label/.style={font=\small}
]
% Position immediately after Alice's first move.
\begin{scope}[xshift=0cm]
 \node[alice]  (u4a) at (0,1.35) {$u_4$};
 \node[vertex] (u1a) at (-1.35,0) {$u_1$};
 \node[vertex] (u3a) at (1.35,0) {$u_3$};
 \node[vertex] (u2a) at (0,-1.35) {$u_2$};
 \node[attack] (x1a) at (-2.65,0) {$x_1$};
 \node[attack] (x3a) at (2.65,0) {$x_3$};
 \draw (u4a)--(u1a)--(u2a)--(u3a)--(u4a);
 \draw (x1a)--(u1a) (u3a)--(x3a);
 \node[font=\small] at (0,-2.05)
   {(a) after Alice colors $u_4$ with $\alpha$};
\end{scope}
% Position after Bob creates the two threats.
\begin{scope}[xshift=7.2cm]
 \node[alice]  (u4b) at (0,1.35) {$u_4$};
 \node[vertex] (u1b) at (-1.35,0) {$u_1$};
 \node[vertex] (u3b) at (1.35,0) {$u_3$};
 \node[bob]    (u2b) at (0,-1.35) {$u_2$};
 \node[attack,label=below:{$\gamma$ legal}] (x1b) at (-2.65,0) {$x_1$};
 \node[attack,label=below:{$\gamma$ legal}] (x3b) at (2.65,0) {$x_3$};
 \draw (u4b)--(u1b)--(u2b)--(u3b)--(u4b);
 \draw (x1b)--(u1b) (u3b)--(x3b);
 \node[font=\scriptsize,align=center] at (-2.65,1.05)
   {$R_1=N[x_1]\cap U$};
 \node[font=\scriptsize,align=center] at (2.65,1.05)
   {$R_3=N[x_3]\cap U$};
 \node[font=\small] at (0,-2.05)
   {(b) Bob colors $u_2$ with $\beta$};
 \node[font=\small] at (0,-2.55) {$R_1\cap R_3=\varnothing$};
\end{scope}
\end{tikzpicture}
\caption{A Type~1 configuration.  In panel~(b),
$(u_1,x_1)$ and $(u_3,x_3)$ are $\gamma$-threats.}
\label{fig:type1-configuration}
\end{figure}

\begin{lemma}\label{lemma1}
In a three-coloring game, if a Type~1 configuration occurs on Bob's turn immediately after Alice's first move, then Bob has a winning strategy.
\end{lemma}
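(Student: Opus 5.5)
Bob's plan is to color $u_2$ with a second color, so that $u_1$ and $u_3$ become targets of threats with the same threat color, and then finish with Lemma~\ref{lem:separated-double-threat}. Let $X=\{\alpha,\beta,\gamma\}$ with $\varphi(u_4)=\alpha$, and let Bob color $u_2$ with $\beta\neq\alpha$. This move is legal because $u_4$ is the only colored vertex and $u_2$ is not adjacent to $u_4$; the edges of the $4$-cycle $u_1u_2u_3u_4u_1$ are $u_1u_2,u_2u_3,u_3u_4,u_4u_1$.

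Next we check that both pairs are $\gamma$-threats. The vertex $u_1$ is adjacent to $u_4$ (colored $\alpha$) and to $u_2$ (colored $\beta$), and $u_4,u_2$ are the only colored vertices. Hence $C_\varphi(u_1)=\{\alpha,\beta\}=X\setminus\{\gamma\}$, and the same holds for $u_3$. For the attack vertices, $x_1\in N(u_1)$ is uncolored by hypothesis. The only colors present in the graph are $\alpha$ on $u_4$ and $\beta$ on $u_2$, so $\gamma\notin C_\varphi(x_1)$, i.e.\ $\gamma\in L_\varphi(x_1)$. The same argument applies to $x_3$. We also need $x_1\neq u_3$ and $x_3\neq u_1$ so that each attack vertex is distinct from the other target. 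However, the definition of a threat only requires $x_i$ to be an uncolored neighbor of $t_i$, and the disjointness hypothesis forces $x_1\neq x_3$. Therefore $(u_1,x_1)$ and $(u_3,x_3)$ are $\gamma$-threats.

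It is now Alice's turn, and by the definition of a Type~1 configuration the sets $N[x_1]\cap U$ and $N[x_3]\cap U$ are disjoint, where $U$ is the uncolored set after Bob's move. Lemma~\ref{lem:separated-double-threat} then gives Bob a winning strategy.

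The step needing care is the verification that $\gamma$ is legal at the attack vertices. This relies on the fact that exactly two vertices are colored, both with colors different from $\gamma$, which is immediate here. The rest is bookkeeping to confirm that the hypotheses of Lemma~\ref{lem:separated-double-threat} are met literally.
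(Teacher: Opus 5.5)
Your proof is correct and is essentially the paper's argument: Bob colors $u_2$ with $\beta\neq\alpha$, which makes $(u_1,x_1)$ and $(u_3,x_3)$ two $\gamma$-threats with disjoint response regions, and Lemma~\ref{lem:separated-double-threat} finishes. One small correction: Bob's move is legal simply because $\beta\neq\alpha$ (as the paper says), so you should not justify it by non-adjacency of $u_2$ and $u_4$, which the definition of a Type~1 configuration does not guarantee; your aside about $x_1\neq u_3$ is also unnecessary.
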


\begin{proof}
Let $\varphi(u_4)=\alpha$, and let $\beta$ and $\gamma$ be the other two colors.  Bob colors $u_2$ with $\beta$. This move is legal because $u_4$ is the only previously colored vertex and $\beta\ne\alpha$. Now $u_1$ and $u_3$ both see the colors $\alpha$ and $\beta$, while $\gamma$ is legal at $x_1$ and $x_3$. Consequently, $(u_1,x_1)$ and $(u_3,x_3)$ are two $\gamma$-threats with disjoint response regions. Lemma~\ref{lem:separated-double-threat} completes the proof.
\end{proof}

The following lemma treats $M_k(C_6)$, an exceptional case in which the root is colored first. Throughout its statement and proof, all subscripts are interpreted modulo~$6$.

\begin{lemma}\label{lem:two-layer-fork}
Consider a coloring game on $M_k(C_6)$ with three colors, where $k\geq2$. Suppose that $v_i^0$ and $v_{i+2}^0$ have distinct colors $\alpha$ and $\beta$, and denote the third color by $\gamma$. Suppose that $v_{i+1}^0$ and $v_{i+1}^1$ are uncolored and that, for some $p,q\in\{i,i+2\}$, the vertices $v_p^1$ and $v_q^2$ are uncolored and $\gamma$ is legal at both of them. Then Bob has a winning strategy.
\end{lemma}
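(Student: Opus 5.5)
The plan is to reduce the statement to the double-threat lemma (Lemma~\ref{lem:separated-double-threat}) by exhibiting two $\gamma$-threats whose response regions are disjoint. First, if it is Bob's turn, the argument will be even shorter: Bob immediately plays one of the threats described below and wins. So the substantive case is a position in which Alice is to move.

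\textbf{Step 1: the two targets.} I take the targets to be $t_1=v_{i+1}^0$ and $t_2=v_{i+1}^1$. In $M_k(C_6)$ we have
\[
N(v_{i+1}^0)=\{v_i^0,v_{i+2}^0,v_i^1,v_{i+2}^1\}
\quad\text{and}\quad
N(v_{i+1}^1)=\{v_i^0,v_{i+2}^0,v_i^2,v_{i+2}^2\}.
\]
Both targets are adjacent to $v_i^0$ and $v_{i+2}^0$, so both already see $\alpha$ and $\beta$. If either target also sees $\gamma$, it is already uncolorable and Bob has won. Otherwise
\[
C_\varphi(t_1)=C_\varphi(t_2)=\{\alpha,\beta\}=X\setminus\{\gamma\}.
\]

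\textbf{Step 2: the two threats.} Put $x_1=v_p^1$ and $x_2=v_q^2$. Since $p\in\{i,i+2\}$, the vertex $x_1$ is adjacent to $t_1$. Likewise $x_2$ is adjacent to $t_2$. By hypothesis both $x_1$ and $x_2$ are uncolored and $\gamma$ is legal at each of them. Hence $(t_1,x_1)$ and $(t_2,x_2)$ are both $\gamma$-threats.

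\textbf{Step 3: disjoint response regions.} This is the only real check, and I expect it to be the main (though modest) obstacle. We have
\[
N[v_p^1]\subseteq\{v_p^1\}\cup V^0\cup V^2,
\qquad
N[v_q^2]\subseteq\{v_q^2\}\cup V^1\cup V^3\cup\{\omega\},
\]
where $\omega$ occurs exactly when $k=2$ and $V^3$ occurs only when $k\ge3$. The layer parities show that the only possible common elements are $v_p^1$ itself and $v_q^2$ itself.
\begin{itemize}
\item $v_p^1\in N(v_q^2)$ would require $p\equiv q\pm1 \pmod 6$.
\item $v_q^2\in N(v_p^1)$ would require $q\equiv p\pm1 \pmod 6$.
\end{itemize}
Both are impossible, because $p,q\in\{i,i+2\}$ give $p-q\in\{0,\pm2\}$ modulo $6$. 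Note that this uses $n=6$: the difference $\pm2$ is not congruent to $\pm1$ modulo $6$.

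It follows that $(N[x_1]\cap U)\cap(N[x_2]\cap U)=\varnothing$. Lemma~\ref{lem:separated-double-threat} then gives Bob a winning strategy from Alice's turn. The remaining care is to confirm that the case $k=2$, where $v_q^2$ is adjacent to $\omega$, introduces no overlap. It does not, since $\omega\notin N[v_p^1]$.
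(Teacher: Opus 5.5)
Your proposal is correct and follows the paper's proof. It uses the same two $\gamma$-threats $(v_{i+1}^0,v_p^1)$ and $(v_{i+1}^1,v_q^2)$, shows that $N[v_p^1]$ and $N[v_q^2]$ are disjoint from the layer structure together with $p-q\equiv 0,\pm2\pmod 6$, and then applies the double-threat lemma. Your extra remarks, about a target already seeing $\gamma$ or Bob being on move, only make explicit cases that the paper handles outside the lemma or leaves implicit.
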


\begin{proof}
Both $v_{i+1}^0$ and $v_{i+1}^1$ are adjacent to $v_i^0$ and $v_{i+2}^0$.  Hence $(v_{i+1}^0,v_p^1)$ and
$(v_{i+1}^1,v_q^2)$ are $\gamma$-threats. Moreover,
\[
 N[v_p^1]\cap N[v_q^2]=\varnothing.
\]
Indeed, $v_p^1$ is adjacent only to vertices in $V^0$ and $V^2$ whose indices are $p-1$ or $p+1$, whereas $v_q^2$ is adjacent only to vertices in $V^1$ and $V^3$ whose indices are $q-1$ or $q+1$. When $k=2$, the vertex $v_q^2$ is adjacent to the root in place of the vertices in $V^3$. Since
\[
 p-q\equiv0,\pm2\pmod{6},
\]
these neighborhood descriptions imply that
\[
 N[v_p^1]\cap N[v_q^2]=\varnothing.
\]
The conclusion now follows from Lemma~\ref{lem:separated-double-threat}.
\end{proof}

\subsection{A uniform five-color upper bound for paths and cycles}
\label{subsec:five-color-bound}

We next establish a uniform upper bound for both graph families before considering the sharp upper bounds obtained with four colors. The proof shows that coloring the root on Alice's first move eliminates it as a possible obstruction and provides the degree estimate required later in the cycle case.
\begin{theorem}
\label{thm:uniform-five}
For every $k\geq2$, the following inequalities hold:
\[
  \chi_g\bigl(M_k(P_n)\bigr)\leq5\qquad(n\geq2)
\]
and
\[
  \chi_g\bigl(M_k(C_n)\bigr)\leq5\qquad(n\geq3).
\]

\end{theorem}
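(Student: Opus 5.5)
The plan is to follow the strategy hinted at before the statement. Alice colors the root $\omega$ on her first move, and afterwards she plays arbitrary legal moves. The degree bound will then guarantee that no vertex ever runs out of colors.

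First I record the degrees. In $M_k(P_n)$ and $M_k(C_n)$, a vertex $v_i^0$ of the base graph has at most $2$ neighbors in $V^0$ and at most $2$ in $V^1$. A vertex $v_i^m$ with $1\leq m\leq k-1$ has at most $2$ neighbors in $V^{m-1}$ and at most $2$ in $V^{m+1}$. A vertex $v_i^k$ has at most $2$ neighbors in $V^{k-1}$, plus $\omega$. Hence every non-root vertex has degree at most four, as already noted in the preliminaries. The only vertex of larger degree is $\omega$, whose degree is $n\geq2$, and $n$ may exceed four.

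Now take $|X|=5$ and let Alice color $\omega$ with any color on her first move. This is legal because nothing else is colored yet. From then on, every uncolored vertex $v$ is a non-root vertex with $\deg(v)\leq4$. At any later moment its colored neighbors use at most four colors, so $|L_\varphi(v)|\geq 5-4=1$. Therefore no uncolored vertex can ever become uncolorable, whatever moves either player makes. Whenever it is a player's turn, some uncolored vertex has a legal color, so the game continues until every vertex is colored. Alice may respond to each of Bob's moves by coloring any uncolored vertex with any legal color, and she wins. This gives $\chi_g\leq5$ for both families, for all $k\geq2$, all $n\geq2$ in the path case, and all $n\geq3$ in the cycle case.

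There is essentially no obstacle here. The only point needing care is the degree count at the boundary layers. Vertices in $V^0$ have no lower layer, and vertices in $V^k$ have $\omega$ in place of an upper layer, so both stay within degree four. For paths, the endpoints have even smaller degree. I will also remark that this covers $k=2$, where $V^1$ is adjacent to both $V^0$ and $V^2$, each contributing at most $2$ neighbors.
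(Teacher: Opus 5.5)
Your proposal is correct and is essentially the paper's proof: Alice colors the root $\omega$ first, every remaining vertex has degree at most four, so with five colors no uncolored vertex can ever lose all its legal colors. Your layer-by-layer degree count, including the boundary layers $V^0$ and $V^k$, is the same check the paper uses.
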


\begin{proof}
Let $G$ be either $P_n$ or $C_n$, with $n\geq2$ when $G=P_n$ and $n\geq3$ when $G=C_n$. Alice colors the root $\omega$ on her first move. Since the root has already been colored, it cannot become an obstruction. Every other vertex of $M_k(G)$ has degree at most four. Indeed, a vertex in $V^0$ has at most two neighbors in $V^0$ and at most two neighbors in $V^1$. For $1\leq m\leq k-1$, a vertex in $V^m$ has at most two neighbors in $V^{m-1}$ and at most two neighbors in $V^{m+1}$. Finally, a vertex in $V^k$ has at most two neighbors in $V^{k-1}$ and is also adjacent to the already colored root.

At every subsequent position, therefore, an uncolored vertex has at most four colors forbidden by its colored neighbors. Since five colors are available, every uncolored vertex has a legal color. The game cannot terminate before all vertices have been colored, and Alice wins. This proves both upper bounds.
\end{proof}

\subsection{The double-doctor lemma}
\label{subsec:double-doctor-method}

To give a unified treatment of the two sharp upper bounds for paths obtained with four colors, we first establish a general lemma based on assigning two safe neighbors to each unsafe vertex of degree four.

An unsafe vertex of degree four is called a \emph{patient}. Let $P$ be a set of patients. A \emph{double-doctor system} for $P$ is a family
\[
 \{D(x):x\in P\}
\]
such that, for every $x\in P$, the set $D(x)$ consists of two distinct uncolored safe neighbors of $x$, neither of which belongs to $P$, and the sets $D(x)$ are pairwise disjoint. The vertices in $D(x)$ are called the \emph{doctors} of $x$. Thus no vertex is assigned as a doctor to more than one patient, and no patient is a doctor. If $P$ is the set of all patients in the current position, then the double-doctor system is called a \emph{double-doctor certificate} for the position.

Thus the strategy and the resulting double-doctor certificate require four checks: Alice's response is legal; the patient list contains every unsafe vertex of degree four; each doctor is an uncolored safe neighbor of its patient and is not a patient; and the sets $D(x)$ are pairwise disjoint.

\begin{lemma}[Double-doctor lemma]
\label{lem:double-doctor}
Consider a position in the coloring game with four colors immediately after a move by Alice. If the root is colored and the position admits a double-doctor certificate, then Alice has a winning strategy from this position.
\end{lemma}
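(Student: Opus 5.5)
The plan is to prove the lemma by maintaining an invariant. Call a position \emph{good} if it is immediately after a move by Alice, the root $\omega$ is colored, and the position admits a double-doctor certificate. I will show three things. First, in a good position every uncolored vertex has a legal color. Second, after any legal move by Bob, Alice has a legal reply that produces another good position, unless all vertices are already colored. Third, in the position after Bob's move every uncolored vertex still has a legal color. Since the game is finite and no uncolored vertex ever loses all its legal colors, the game ends with every vertex colored, and Alice wins.

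The first step collects the monotonicity facts. By the remark after Definition~\ref{def:safe}, a safe vertex remains safe for the rest of the game and can never become uncolorable. An uncolored vertex of degree four is either safe or a patient, so every vertex that is unsafe at a later position was already unsafe earlier. Hence the set of patients can only shrink as the game proceeds; no new patient ever appears. The root is colored, so it causes no trouble. Now take a patient $x$ in a good position. Both of its doctors are uncolored neighbors of $x$, so $x$ has at most two colored neighbors. Therefore at most two colors are forbidden at $x$, and $x$ has at least two legal colors. This settles the first claim.

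The second step is the case analysis for Bob's move, in which he colors some vertex $v$. Let $P'$ be the set of patients after his move; by the first step, $P'\subseteq P$. The sets $D(x)$ are pairwise disjoint, so $v$ is a doctor of at most one patient. If $v\in D(x)$ for some $x\in P'$, then $x$ still has one uncolored doctor. So $x$ has at most three colored neighbors and therefore at least one legal color, and Alice colors $x$ with such a color. In every other case, if $P'\neq\varnothing$, Alice colors an arbitrary patient $x\in P'$, which has a legal color since both of its doctors are uncolored. Each of Alice's moves is legal by this count. After her move, I restrict the old family to the remaining patients, $\{D(y):y\in P''\}$, where $P''\subseteq P'\setminus\{x\}$ is the current set of patients. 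Each $D(y)$ still consists of two uncolored vertices: Bob colored at most one doctor, and that doctor belonged to $x$; Alice colored the patient $x$ itself, which is not a doctor. These doctors are still safe, are not patients, and the sets remain pairwise disjoint, so the restricted family is a double-doctor certificate.

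It remains to treat the case $P'=\varnothing$, where the third claim is also checked. If $P'=\varnothing$, every uncolored vertex is safe, so no vertex can ever become uncolorable. Alice can then make any legal move, which exists since every uncolored vertex has a legal color, and the empty family serves as the certificate from then on. For the third claim, after Bob's move a patient has lost at most one doctor, so it has at least one legal color, while safe vertices are never blocked. Thus Bob can never leave an uncolorable vertex.

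The step I expect to need the most care is the bookkeeping that the patient set never grows, including that the doctors never become patients. This relies entirely on the permanence of safety, which is already established after Definition~\ref{def:safe}. The disjointness of the sets $D(x)$ is exactly what guarantees that a single move by Bob harms at most one patient.
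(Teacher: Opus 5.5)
Your proof is correct and follows essentially the same route as the paper's. In both, Alice colors a patient whenever Bob colors one of its doctors and otherwise colors any remaining patient, and the permanence of safety together with the disjointness of the sets $D(x)$ shows that the certificate persists and that no new patients appear.

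One small wording point: in your second case, Bob's colored doctor may belong to a patient that his move has just made safe, rather than to $x$. That patient is no longer in $P''$, so the restricted family is still intact and the argument is unaffected.
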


\begin{proof}
Keep the displayed doctor assignment until one of its vertices is used. After each move by Bob, first remove from the list any patient that Bob has colored. If no patient remains, Alice proceeds to the safe vertices. Otherwise, if Bob has colored a doctor of a remaining patient $x$, then the other doctor of $x$ is still uncolored. Hence at most three neighbors of $x$ are colored, so Alice colors $x$. If Bob has not colored a doctor of any remaining patient, Alice colors any remaining patient; both of its doctors are still uncolored, so this move is legal.

Because the sets $D(x)$ are pairwise disjoint and no patient is a doctor, these moves do not remove a doctor belonging to another patient. Thus every round strictly decreases the number of patients and preserves the double-doctor system for all remaining patients. After finitely many rounds, no patient remains. Since safety is permanent, no new patient can arise. Consequently, every remaining uncolored vertex is safe, and the coloring can be completed.
\end{proof}

\section{The coloring game on generalized Mycielski graphs of paths}
\label{sec:paths}

We first determine the exact values for the short paths and establish a general lower bound of four for all longer paths. Combined with the uniform upper bound in Theorem~\ref{thm:uniform-five}, this shows that the game chromatic numbers in the remaining cases are either four or five. We then apply the double-doctor lemma to show that \(M_2(P_5)\) and \(M_2(P_6)\) both have game chromatic number four.

\begin{theorem}
\label{thm:path-values}
For $k\geq2$, the following statements hold for generalized Mycielski graphs of paths:

\begin{enumerate}[label={\upshape(\arabic*)},
                  nosep,
                  leftmargin=*]
\item $\chi_g(M_k(P_2))=3$;
\item $\chi_g(M_k(P_3))=3$;
\item $\chi_g(M_k(P_4))=3$;
\item $\chi_g(M_k(P_n))\geq4$ for $n\geq5$.
\end{enumerate}
\end{theorem}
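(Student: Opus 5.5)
The four statements split naturally into lower bounds, which come from Bob strategies with fewer colors, and upper bounds, which come from explicit Alice strategies with three colors.

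\textbf{Lower bounds in (1)--(3).} For $\chi_g\geq3$ it suffices that Bob wins with two colors. The graph $M_k(P_n)$ with $n\geq2$ contains the edge $v_1^0v_2^0$ and the edge $v_1^0v_2^1$, and $v_2^0,v_2^1$ are nonadjacent. Whatever Alice colors first, Bob can usually color a vertex at distance two from an uncolored vertex with the opposite color. The cleanest route is to note that $M_k(P_n)$ contains an odd cycle. For instance, $v_1^0 v_2^0 v_1^1 \cdots$ combined with the root gives odd closed walks when the layer parity is appropriate. In any case, a non-bipartite graph has $\chi\geq3$, and I would check that $M_k(P_2)$ already contains an odd cycle through $\omega$: the path $v_1^0v_2^0$, its layer copies, and the two vertices $v_1^k,v_2^k$ both adjacent to $\omega$ close a cycle of odd length. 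Hence $\chi_g\geq\chi\geq3$.

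\textbf{Upper bounds in (1)--(3).} I would give Alice a strategy with three colors. Alice first colors $\omega$. After that, every non-root vertex of $M_k(P_n)$ for $n\leq4$ has degree at most $4$, but only the vertices over the interior base indices $2,3$ reach degree $4$. The end-index vertices $v_1^m,v_n^m$ have degree at most $2$ apart from the root, so they are permanently safe for three colors, since degree $\leq2$ means they always have a legal color. For $P_2$ every non-root vertex has degree at most $2$ once $\omega$ is colored, except that $V^k$ vertices see $\omega$ and one neighbor, so all are safe and Alice wins trivially. For $P_3$ and $P_4$, the dangerous vertices are the middle-index ones, and I would adapt the double-doctor idea to three colors. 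The aim is that each degree-$\geq3$ vertex gets a pair of private low-degree neighbors, namely the end-index copies, or that Alice answers Bob's moves by coloring in the same column $\{v_i^{m-1},v_i^{m+1}\}$ so that neighbors of a middle vertex repeat colors. Concretely, Alice's strategy is to respond to a move at $v_j^m$ by coloring a suitably paired vertex with the same color, so that every middle vertex sees at most two distinct colors. Verifying this pairing for all layers and for the root edge is the main obstacle, and I expect a case analysis on whether Bob plays in an end column or a middle column.

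\textbf{Part (4).} Part (4) follows from Lemma~\ref{lemma1}: it suffices that, whatever Alice's first move, Bob finds a Type~1 configuration. The $4$-cycles available are $v_{i-1}^m v_i^{m+1} v_{i+1}^m v_i^{m-1}$ and similar ones in the layered structure, with $n\geq5$ giving room for attack vertices $x_1,x_3$ whose closed neighborhoods are disjoint. The plan has three steps.
\begin{enumerate}[label={\upshape(\alph*)},nosep,leftmargin=*]
\item For each possible first move $z$ of Alice (the root, a vertex of $V^0$, of an internal layer, or of $V^k$, split by index near an end or in the interior), choose a $4$-cycle containing $z$ as $u_4$, oriented so that $u_1,u_3$ each have a further neighbor outside the cycle.
\item When $z$ lies in no suitable $4$-cycle, for example the root or an end vertex, let Bob instead use a $4$-cycle far from $z$. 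In that case two vertices of the cycle are uncolored, and Bob needs a two-move argument; alternatively, pick a $4$-cycle through a neighbor pattern and verify the hypothesis directly.
\item Check disjointness of $N[x_1]\cap U$ and $N[x_3]\cap U$ using index distance: $x_1,x_3$ are chosen with base indices differing by at least $4$, or in layers differing by at least $3$.
\end{enumerate}
The main obstacle is the root and the boundary cases. For these I would first try taking $u_4=z$ whenever $z$ is non-root, and for $z=\omega$ argue via an analogue of Lemma~\ref{lem:two-layer-fork} with Bob's own move creating two $\gamma$-threats in $V^0$ and $V^1$ at indices $i\pm$ spaced apart, which needs $n\geq5$.
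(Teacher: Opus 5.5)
Your argument for (1) and for all the lower bounds in (1)--(3) is fine: $M_k(P_2)$ is itself the odd cycle $C_{2k+3}$, so $\Delta=2$ gives the upper bound, and this cycle sits inside every $M_k(P_n)$. The genuine gap is the three-color upper bound for $P_3$ and $P_4$. You never fix a strategy, and neither idea you float works.

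\emph{The double-doctor idea.} The double-doctor lemma is a four-color statement. With three colors, a degree-four vertex is already blocked once three of its neighbors carry distinct colors, so two reserved neighbors do not protect it. Moreover, each end-column vertex $v_1^m$ with $1\leq m\leq k-1$ is adjacent to both $v_2^{m-1}$ and $v_2^{m+1}$, so private doctors are not available.

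\emph{The column pairing.} The sets $\{v_i^{m-1},v_i^{m+1}\}$ are not a matching, since a vertex generally lies in two of them. No copying invariant can be maintained on them.

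\emph{The missing idea} is a horizontal pairing inside each layer. After coloring $\omega$, Alice answers a move at $v_1^m$ or $v_3^m$ by copying Bob's color onto its mate. For $P_4$ she does the same with $\{v_2^m,v_4^m\}$. For $P_3$ she answers a move in the middle column $\{v_2^m:0\leq m\leq k\}$ by any legal move in that column. In $M_k(P_3)$ the mates are nonadjacent and have identical open neighborhoods, so copying is legal. In $M_k(P_4)$ the neighborhoods are only nested, $N(v_1^m)\subseteq N(v_3^m)$ and $N(v_4^m)\subseteq N(v_2^m)$, so copying from the outer to the inner vertex needs an extra argument. If Bob's color already occurred on a vertex in the difference of the two neighborhoods, then by the invariant it would also occur on that vertex's mate. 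That mate lies in the outer vertex's neighborhood, contradicting the legality of Bob's move. Finally, every non-root neighborhood is covered by at most two pairs (one pair and $\omega$ in $V^k$), and at most one pair is incomplete after Bob's move. Hence no uncolored vertex ever sees three colors.

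For (4), your step (a) is exactly the paper's route, but you never produce the configurations, and the cases you set aside as exceptional are not exceptional. The root lies in the $4$-cycle $\omega\,v_2^k\,v_3^{k-1}\,v_4^k$. With $x_1=v_1^{k-1}$ and $x_3=v_5^{k-1}$ (this is where $n\geq5$ is used), it is a Type~1 configuration. So no analogue of Lemma~\ref{lem:two-layer-fork} is needed; such a fork would in any case require two colored base vertices, hence an extra round in which Alice's intermediate move must be handled case by case.

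End vertices are handled by orienting the cycle inward, but your disjointness test (c) cannot certify them. The only $4$-cycle through $v_1^0$ is $v_1^0v_2^0v_3^0v_2^1$. It forces $x_1\in\{v_1^1,v_3^1\}$ and $x_3\in\{v_1^2,v_3^2\}$, which lie in adjacent layers with base indices at most two apart. Disjointness, e.g.\ $N[v_1^1]\cap N[v_1^2]=\varnothing$, comes instead from parity: every neighbor of $v_i^m$ other than $\omega$ has index $i\pm1$ and lies in layer $m\pm1$, except the neighbors $v_{i\pm1}^0$ of a base vertex.

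Using this, the paper lists an explicit cycle for each $v_i^m$, with a direction $s=\pm1$ chosen so that $v_{i+2s}^m$ exists, $u_4=v_i^m$, and the following attack vertices:
\begin{itemize}
\item $v_i^1,v_i^2$ for $m=0$;
\item $v_i^{\max\{0,m-2\}},v_i^{m+2}$ for $1\leq m\leq k-2$;
\item $v_i^{\max\{0,k-3\}},\omega$ for $m=k-1$;
\item $v_{i+s}^k,v_i^{k-2}$ for $m=k$.
\end{itemize}
Without such a list, part (4) is a plan rather than a proof.
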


\begin{proof}
Let $V_i^\ast=\{v_i^m:0\leq m\leq k\}$, the set consisting of $v_i^0$ and all of its copies.
\vspace{0.3em}

\noindent (1) $\boldsymbol{\chi_g(M_k(P_2))=3.}$ \vspace{0.5em}

Since $M_k(P_2)$ is isomorphic to $C_{2(k+1)+1}$, it follows that $\chi_g(M_k(P_2))=\chi_g(C_{2k+3})=3$. \vspace{0.5em}

\noindent (2) $\boldsymbol{\chi_g(M_k(P_3))=3.}$

\begin{itemize}
\item \textit{Lower bound}: The subgraph induced by the root and the copies of the endpoints of any edge of $P_3$ is isomorphic to $M_k(P_2)\cong C_{2k+3}$. Thus $\chi(M_k(P_3))\geq3$, and consequently $\chi_g(M_k(P_3))\geq3$.

\item \textit{Upper bound}: To show that 3 colors suffice, assume $\lvert X \rvert = 3$. Alice colors $\omega$ on her first turn, then she executes the following strategy until all vertices in $V_2^\ast$ are colored.
\end{itemize}

\begin{enumerate}[label=(A\arabic*), labelindent=3.5em]
\item If Bob colors $v_1^m$ (or $v_3^m$) with color $\alpha$ for $m=0,1, \dots, k$, then Alice colors $v_3^m$ (or $v_1^m$) with the same color.

\item If Bob colors a vertex in $V_2^\ast$, then Alice colors any uncolored vertex in $V_2^\ast$ with a proper color.
\end{enumerate}

The two vertices paired in (A1) have identical open neighborhoods, so the copying move is always legal. At the beginning of each of Bob's turns, every such pair is either uncolored or colored with the same color. Therefore each uncolored vertex in $V_2^\ast$ sees at most two colors and has a legal color when Alice applies (A2). After all vertices in $V_2^\ast$ are colored, every remaining uncolored vertex has degree two, so Alice can complete the coloring with three colors. \vspace{0.5em}

\noindent (3) $\boldsymbol{\chi_g(M_k(P_4))=3.}$

\begin{itemize}
\item \textit{Lower bound}: As in (2), $M_k(P_4)$ contains an odd cycle isomorphic to $M_k(P_2)$, and hence $\chi_g(M_k(P_4))\geq3$.

\item \textit{Upper bound}: Let $\lvert X \rvert = 3$. Alice colors $\omega$ on her first turn, then proceeds as follows.
\end{itemize}

\begin{enumerate}[label=(A\arabic*), labelindent=3.5em]
\item If Bob colors $v_1^m$ (or $v_3^m$) with color $\alpha$ for $m=0,\dots,k$, then Alice colors $v_3^m$ (or $v_1^m$) with the same color.

\item If Bob colors $v_2^m$ (or $v_4^m$) with color $\alpha$ for $m=0,\dots,k$, then Alice colors $v_4^m$ (or $v_2^m$) with the same color.
\end{enumerate}

For each layer $V^m$, consider the pairs
$\{v_1^m,v_3^m\}$ and $\{v_2^m,v_4^m\}$. Alice maintains the following invariant: immediately before each move by Bob, the two vertices in every pair are either both uncolored or colored with the same color. We show that Alice can restore this invariant after each of Bob's moves.

In each pair, call the vertex of smaller degree the outer vertex and the other vertex the inner vertex. The neighborhood of the outer vertex is contained in that of the inner vertex. Hence, if Bob colors the inner vertex, his color is legal at the outer vertex. Conversely, suppose that Bob colors the outer vertex with a color that is not legal at the inner vertex. That color then occurs at a vertex in the difference of the two neighborhoods. The mate of this vertex belongs to the neighborhood of the outer vertex. By the invariant, the mate has the same color, which contradicts the legality of Bob's move. Thus, regardless of which vertex of a pair Bob colors, Alice can legally color the other vertex with the same color.

Finally, the neighborhood of every non-root vertex is contained in the union of at most two of these pairs. For a vertex in $V^k$, one of the pairs is replaced by the already colored root. At the beginning of each of Bob's turns, each pair contributes at most one color. After Bob's move, at most one pair is incomplete, and that pair still contributes only one color. Consequently, no uncolored vertex has neighbors of all three colors. Alice's prescribed response is therefore always legal, and all vertices are eventually colored.
 \vspace{0.5em}

\noindent (4) $\boldsymbol{\chi_g(M_k(P_n))\geq4\quad(n\geq5).}$

Assume that only three colors are available.  We show that Bob wins.

If Alice colors $\omega$, take
\[
 (u_1,u_2,u_3,u_4)
 =(v_2^k,v_3^{k-1},v_4^k,\omega),
 \qquad
 (x_1,x_3)=(v_1^{k-1},v_5^{k-1}).
\]
These vertices exist because $n\geq5$.  The four vertices $u_1,u_2,u_3,u_4$ form a $4$-cycle, and direct inspection of the neighborhoods gives $N[x_1]\cap N[x_3]=\varnothing$. Thus this is a Type~1 configuration.

Suppose next that Alice colors $v_i^m$.  Choose
\[
 s=\begin{cases}
 1,&i\leq n-2,\\
 -1,&i\in\{n-1,n\}.
 \end{cases}
\]
Table~\ref{tab:path-type1-configurations} specifies a Type~1 configuration. The entry in the second column is the ordered cycle $u_1u_2u_3u_4u_1$, with $u_4=v_i^m$ the vertex colored by Alice.

\begin{table}[htbp]
\centering
\caption{Type~1 configurations in $M_k(P_n)$ after Alice colors a non-root vertex.}
\label{tab:path-type1-configurations}
\footnotesize
\begin{tabular}{c|c|c|c}
\toprule
$m$ & $(u_1,u_2,u_3,u_4)$ & $x_1$ & $x_3$\\
\midrule
$0$
& $(v_{i+s}^0,v_{i+2s}^0,v_{i+s}^1,v_i^0)$
& $v_i^1$ & $v_i^2$\\
$1\leq m\leq k-2$
& $(v_{i+s}^{m-1},v_{i+2s}^m,v_{i+s}^{m+1},v_i^m)$
& $v_i^{\max\{0,m-2\}}$ & $v_i^{m+2}$\\
$k-1$
& $(v_{i+s}^{k-2},v_{i+2s}^{k-1},v_{i+s}^k,v_i^{k-1})$
& $v_i^{\max\{0,k-3\}}$ & $\omega$\\
$k$
& $(\omega,v_{i+2s}^k,v_{i+s}^{k-1},v_i^k)$
& $v_{i+s}^k$ & $v_i^{k-2}$\\
\bottomrule
\end{tabular}
\end{table}

The row $1\leq m\leq k-2$ in Table~\ref{tab:path-type1-configurations} is empty when $k=2$. In every remaining row, $x_1\in N(u_1)\setminus\{u_2,u_4\}$, $x_3\in N(u_3)\setminus\{u_2,u_4\}$, and the definition of $M_k(P_n)$ gives
\[
 N[x_1]\cap N[x_3]=\varnothing.
\]
Lemma~\ref{lemma1} therefore gives Bob a win in every case, and hence $\chi_g(M_k(P_n))\geq4$. \qedhere
\end{proof}

\begin{theorem}
\label{thm:M2P5-exact}
  $\chi_g\bigl(M_2(P_5)\bigr)=4$.

\end{theorem}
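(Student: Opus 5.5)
The lower bound $\chi_g(M_2(P_5))\geq4$ is Theorem~\ref{thm:path-values}(4) with $k=2$, $n=5$. It remains to give Alice a winning strategy with $|X|=4$. I would play her first move on the root $\omega$ and then apply the double-doctor lemma (Lemma~\ref{lem:double-doctor}). The difficulty is that, right after Alice colors $\omega$, Bob moves next. Lemma~\ref{lem:double-doctor} is stated for positions immediately after a move by Alice. So the plan is to describe, for every possible first move of Bob, a reply by Alice after which the position (with $\omega$ colored) admits a double-doctor certificate.

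\textbf{Step 1: the initial position.} Once $\omega$ is colored, determine which vertices are safe. In $M_2(P_5)$ the vertices of degree at most three are the copies of the endpoints, namely $v_1^0,v_5^0,v_1^1,v_5^1$ (degree $2$), and $v_1^2,v_5^2$ (degree $2$: one neighbor in $V^1$ plus $\omega$). The $V^2$ vertices $v_j^2$ with $2\le j\le4$ have degree $3$: two neighbors in $V^1$ plus $\omega$. All of these are permanently safe. The degree-four vertices are $v_j^0$ and $v_j^1$ for $j=2,3,4$. So there are at most six potential patients, and every $V^2$ vertex is safe. Each patient needs two uncolored safe neighbors, with all doctor sets pairwise disjoint.

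\textbf{Step 2: case analysis on Bob's first move.} By the symmetry $i\mapsto 6-i$, Bob's move is one of:
\begin{itemize}
\item an endpoint copy $v_1^m$;
\item $v_2^m$ or $v_3^m$ with $m\in\{0,1\}$;
\item a vertex $v_j^2$.
\end{itemize}
In each case Alice's reply should either color a patient, reducing the patient list, or make a patient safe via condition~(iii). She can do this by repeating a color on two neighbors of a patient, for instance coloring $v_{j+2}^m$ with Bob's color where that is legal. After her reply I would list the remaining patients and assign doctors. Natural doctors are endpoint copies and $V^2$ vertices. For example, $v_2^1$ has safe neighbors $v_1^0$ and $v_1^2$ (also $v_3^0$, $v_3^2$). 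Likewise $v_4^1$ can take $v_5^0,v_5^2$, and $v_3^1$ can take $v_2^2,v_4^2$. The layer-$0$ patients $v_2^0,v_3^0,v_4^0$ have neighbors only in $V^0\cup V^1$. Here $v_2^0$ can use $v_1^0,v_1^1$ and $v_4^0$ can use $v_5^0,v_5^1$.

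\textbf{Step 3: the main obstacle.} The central vertex $v_3^0$ is the problem. Its neighbors are $v_2^0,v_4^0,v_2^1,v_4^1$, all potential patients, so it has no safe neighbor at the start. Alice must therefore resolve $v_3^0$ quickly, by coloring it herself or by making it safe. If Bob's first move touches the center region, Alice answers by coloring $v_3^0$ (or $v_3^1$) or by duplicating a color on its neighborhood. If Bob plays elsewhere, Alice colors $v_3^0$ directly. After coloring $v_3^0$, its neighbors $v_2^0$ and $v_4^0$ gain colored neighbors, and the doctor assignments of Step 2 must be checked for disjointness. The tight spot is that $v_1^0$ or $v_5^0$ may be needed both by a layer-$0$ patient and by $v_2^1$ or $v_4^1$. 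The layer-$1$ patients have the alternatives $v_1^2,v_3^2$ or $v_3^2,v_5^2$, but $v_3^2$ can serve only one of them. So I expect the delicate subcases to be those in which Bob's first move colors one of the endpoint copies or $V^2$ vertices that are intended as doctors. There, Alice's reply must be chosen to free an alternative doctor or to make the affected patient safe. I would tabulate Bob's move, Alice's reply, the patient list and the doctor sets, and verify the four checks listed before Lemma~\ref{lem:double-doctor} row by row. Lemma~\ref{lem:double-doctor} then finishes the proof.
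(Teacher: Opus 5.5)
Your outline matches the paper: Alice colors $\omega$, answers Bob by copying his color at a nonadjacent vertex, and then invokes Lemma~\ref{lem:double-doctor}. Your diagnosis of $v_3^0$ as the bottleneck is also correct. The gap is that your plan requires a certificate after a single reply to every first move of Bob, and this is impossible when Bob's first move colors $v_2^2$ (or, by reflection, $v_4^2$) with some $q\ne p$.

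To see why, let $y$ be Alice's reply; the colored vertices are then $\omega$, $v_2^2$ and $y$.
\begin{itemize}
\item No degree-four vertex is adjacent to $\omega$, so none has three colored neighbors, and condition~(iv) of Definition~\ref{def:safe} never applies.
\item Condition~(iii) can only arise at a degree-four common neighbor of $v_2^2$ and $y$. The only degree-four neighbor of $v_2^2$ is $v_3^1$.
\item Hence each of $v_2^0,v_4^0,v_2^1,v_4^1$ other than $y$ is a patient.
\item If $y\ne v_3^0$, then $v_3^0$ is a patient whose uncolored neighbors are all patients, so it has no doctor at all.
\item If $y=v_3^0$, then $v_3^1$ has only one colored neighbor and is a patient. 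Its neighbors are $v_2^2$ (colored), the patients $v_2^0,v_4^0$, and $v_4^2$, so it has only one possible doctor.
\end{itemize}
So no reply yields a double-doctor certificate in this branch, and no one-step table of the kind you describe can close it.

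The missing ingredient is a second round in this branch, which is how the paper proceeds. Alice replies $v_2^0=q$. This makes $v_3^1$ safe and leaves the patients $v_3^0,v_4^0,v_2^1,v_4^1$, with $v_3^0$ still lacking doctors. Then, for every second move of Bob at a vertex $z$ with color $r$, split according to whether $r=q$, she makes one more copying move. In most rows this is $v_3^0=r$; in several subcases with $r=q$ it is $v_4^0=q$ or $v_2^1=q$. After that move a certificate exists. You would need to supply this exhaustive second-move table, together with its legality and certificate checks.

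A smaller point concerns the other branches: the color of Alice's reply matters, not only the vertex. For instance, after Bob colors $v_1^2$ with $q$, Alice must color $v_3^0$ with $q$, which makes $v_2^1$ safe. With any other color, $v_2^0$ and $v_2^1$ are both patients whose doctor sets must both contain $v_1^0$, so no certificate exists.
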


\begin{proof}
The lower bound is Theorem~\ref{thm:path-values}(4). We prove the upper bound. Use the notation
\[
 a_i=v_i^0,\qquad b_i=v_i^1,\qquad c_i=v_i^2
 \qquad(1\leq i\leq5),
\]
and write $\omega$ for the root. The neighborhoods are
\begin{equation}
\label{eq:M2P5-neighborhoods}
\begin{aligned}
N(a_i)&=\{a_{i-1},a_{i+1},b_{i-1},b_{i+1}\},\\
N(b_i)&=\{a_{i-1},a_{i+1},c_{i-1},c_{i+1}\},\\
N(c_i)&=\{b_{i-1},b_{i+1},\omega\},\\
N(\omega)&=\{c_1,\ldots,c_5\}.
\end{aligned}
\end{equation}
In these formulas, terms with indices outside $\{1,\ldots,5\}$ are omitted. Once $\omega$ is colored, the only vertices that can be patients are
\begin{equation}
\label{eq:M2P5-patients}
 D_5=\{a_2,a_3,a_4,b_2,b_3,b_4\}.
\end{equation}

%\begin{figure}[htbp]
%\centering
%\begin{tikzpicture}[
%  x=1.42cm,y=1.05cm,
%  every node/.style={circle,draw,inner sep=1.7pt,font=\small},
%  danger/.style={circle,draw,fill=black!18,inner sep=1.7pt,font=\small},
%  root/.style={circle,draw,fill=black!8,inner sep=2pt,font=\small}
%]
%\foreach \i in {1,5}{
%  \node (a\i) at (\i,2) {$a_{\i}$};
%  \node (b\i) at (\i,1) {$b_{\i}$};
%}
%\foreach \i in {2,3,4}{
%  \node[danger] (a\i) at (\i,2) {$a_{\i}$};
%  \node[danger] (b\i) at (\i,1) {$b_{\i}$};
%}
%\foreach \i in {1,...,5}{\node (c\i) at (\i,0) {$c_{\i}$};}
%\node[root] (w) at (3,-1.25) {$\omega$};
%\foreach \i [evaluate=\i as \j using int(\i+1)] in {1,...,4}{
%  \draw (a\i)--(a\j);
%  \draw (a\i)--(b\j) (a\j)--(b\i);
%  \draw (b\i)--(c\j) (b\j)--(c\i);
%}
%\foreach \i in {1,...,5}{\draw (c\i)--(w);}
%\end{tikzpicture}
%\caption{The graph $M_2(P_5)$. The vertices of degree four listed in
%\eqref{eq:M2P5-patients} are shaded.}
%\label{fig:M2P5-structure}
%\end{figure}

Alice first colors $\omega$ with a color $p$. Suppose that Bob then uses color $q$. By the reflection $i\mapsto6-i$, it is enough to consider Bob's move in columns $1,2,3$. Alice copies $q$ at the vertex specified in Table~\ref{tab:M2P5-first-response}.

\begin{table}[htbp]
\centering
\caption{Alice's first response in $M_2(P_5)$.}
\label{tab:M2P5-first-response}
\small
\begin{tabular}{c|ccccccccc}
\toprule
Bob & $a_1$&$b_1$&$c_1$&$a_2$&$b_2$&$c_2$&$a_3$&$b_3$&$c_3$\\
\midrule
Alice & $a_3$&$a_3$&$a_3$&$b_2$&$a_2$&$a_2$&$a_1$&$a_3$&$a_3$\\
\bottomrule
\end{tabular}
\end{table}

The two vertices in every column of the table are nonadjacent. Before Alice's response only $\omega$ and Bob's vertex have been colored, and none of the response vertices is adjacent to $\omega$. Hence every response is legal. Up to reflection, the two vertices receiving the same color form one of the seven pairs listed in Table~\ref{tab:M2P5-immediate}. In its first six rows, the table lists all patients and assigns two doctors to
each of them. The notation $x:(u,v)$ means that $u,v$ are the doctors of $x$.

\begin{table}[htbp]
\centering
\caption{Double-doctor certificates after Alice's first response in $M_2(P_5)$.}
\label{tab:M2P5-immediate}
\footnotesize
\begin{tabular}{c|l}
\toprule
vertices with the same color & patients and their doctors\\
\midrule
$a_1=a_3=q$
 & $a_4:(b_5,a_5)$; $b_3:(a_2,c_2)$; $b_4:(c_3,c_5)$\\
$b_1=a_3=q$
 & $a_4:(b_5,a_5)$; $b_2:(a_1,c_1)$;
   $b_3:(a_2,c_2)$; $b_4:(c_3,c_5)$\\
$c_1=a_3=q$
 & $a_2:(a_1,b_1)$; $a_4:(b_5,a_5)$;
   $b_3:(c_2,c_4)$; $b_4:(c_3,c_5)$\\
$a_3=b_3=q$
 & $b_2:(a_1,c_1)$; $b_4:(c_3,a_5)$\\
$a_3=c_3=q$
 & $a_2:(a_1,b_1)$; $a_4:(b_5,a_5)$; $b_3:(c_2,c_4)$\\
$a_2=b_2=q$
 & $a_4:(b_5,a_3)$; $b_3:(c_2,c_4)$; $b_4:(c_3,a_5)$\\
$a_2=c_2=q$ & deferred to Table~\ref{tab:M2P5-second-response}\\
\bottomrule
\end{tabular}
\end{table}

For clarity, we verify below that the patient column is complete. In each row, the two sets form a partition of the set $D_5$ of vertices of degree four defined in \eqref{eq:M2P5-patients}.

Table~\ref{tab:M2P5-patient-audit} verifies that the patient lists are complete.

\begin{table}[H]
\centering
\caption{Patient audit after Alice's first response in $M_2(P_5)$.}
\label{tab:M2P5-patient-audit}
\small
\begin{tabular}{c|l|l}
\toprule
vertices with the same color & colored or safe members of $D_5$ & patients\\
\midrule
$a_1=a_3$ & $a_2,a_3,b_2$ & $a_4,b_3,b_4$\\
$b_1=a_3$ & $a_2,a_3$ & $a_4,b_2,b_3,b_4$\\
$c_1=a_3$ & $a_3,b_2$ & $a_2,a_4,b_3,b_4$\\
$a_3=b_3$ & $a_2,a_3,a_4,b_3$ & $b_2,b_4$\\
$a_3=c_3$ & $a_3,b_2,b_4$ & $a_2,a_4,b_3$\\
$a_2=b_2$ & $a_2,a_3,b_2$ & $a_4,b_3,b_4$\\
$a_2=c_2$ & $a_2,b_3$ & $a_3,a_4,b_2,b_4$\\
\bottomrule
\end{tabular}
\end{table}

Indeed, the entries in the middle column are either already colored or have two neighbors with the same color. Every vertex in the last column is uncolored and satisfies none of the four conditions in Definition~\ref{def:safe}. Every non-root vertex outside $D_5$ has degree at most three, and the root is already colored. Thus the last column is exactly the patient set.

Every displayed doctor in Table~\ref{tab:M2P5-immediate} is uncolored and safe: it either has degree at most three, or is one of the vertices made safe by the indicated pair of vertices with the same color. The doctor lists are pairwise disjoint within each row. The doctors are adjacent to their patients by \eqref{eq:M2P5-neighborhoods}, so all conditions of a double-doctor certificate are satisfied. The double-doctor lemma therefore completes the first six rows.

It remains to treat $a_2=c_2=q$. Since $c_2$ is adjacent to $\omega$, we have $q\ne p$. Let Bob's second move be $z=r$. Table~\ref{tab:M2P5-second-response} gives Alice's response and, in the last column, the complete double-doctor certificate after that response. The conditions displayed in the middle column are the only possible ones: $q$ is unavailable at $a_1,b_1,a_3,b_3$, whereas $p$ is unavailable at every $c_i$.

\begingroup
\footnotesize
\renewcommand{\arraystretch}{1.08}
\begin{longtable}{c|c|c|p{0.43\textwidth}}
\caption{The second response in the exceptional position $a_2=c_2=q$.}
\label{tab:M2P5-second-response}\\
\toprule
$z$ & condition & Alice & patients and doctors\\
\midrule
\endfirsthead
\toprule
$z$ & condition & Alice & patients and doctors\\
\midrule
\endhead
\bottomrule
\endfoot
$a_1$ & $r\ne q$ & $a_3=r$
 & $a_4:(b_5,a_5)$; $b_4:(c_3,c_5)$\\
$b_1$ & $r\ne q$ & $a_3=r$
 & $a_4:(b_5,a_5)$; $b_2:(a_1,c_1)$; $b_4:(c_3,c_5)$\\
$a_3$ & $r\ne q$ & $b_3=r$
 & $b_2:(a_1,c_1)$; $b_4:(c_3,a_5)$\\
$b_3$ & $r\ne q$ & $a_3=r$
 & $b_2:(a_1,c_1)$; $b_4:(c_3,a_5)$\\
$a_4$ & any $r$ & $b_2=r$
 & $b_4:(a_3,c_3)$\\
$b_2$ & any $r$ & $a_4=r$
 & $b_4:(a_3,c_3)$\\
$b_4$ & any $r$ & $a_4=r$
 & $b_2:(a_1,c_1)$\\
$a_5$ & $r=q$ & $b_2=q$
 & $a_4:(b_5,a_3)$; $b_4:(c_3,c_5)$\\
$a_5$ & $r\ne q$ & $a_3=r$
 & $b_2:(a_1,c_1)$\\
$b_5$ & $r=q$ & $b_2=q$
 & $a_4:(a_3,a_5)$; $b_4:(c_3,c_5)$\\
$b_5$ & $r\ne q$ & $a_3=r$
 & $b_2:(a_1,c_1)$; $b_4:(c_3,a_5)$\\
$c_1$ & $r=q$ & $a_4=q$
 & $b_2:(a_1,a_3)$; $b_4:(c_3,a_5)$\\
$c_1$ & $r\ne q$ & $a_3=r$
 & $a_4:(b_5,a_5)$; $b_4:(c_3,c_5)$\\
$c_3$ & $r=q$ & $a_4=q$
 & $b_2:(a_1,c_1)$; $b_4:(a_3,a_5)$\\
$c_3$ & $r\ne q$ & $a_3=r$
 & $a_4:(b_5,a_5)$\\
$c_4$ & $r=q$ & $a_4=q$
 & $b_2:(a_1,c_1)$; $b_4:(a_3,c_3)$\\
$c_4$ & $r\ne q$ & $a_3=r$
 & $a_4:(b_5,a_5)$; $b_2:(a_1,c_1)$; $b_4:(c_3,c_5)$\\
$c_5$ & $r=q$ & $a_4=q$
 & $b_2:(a_1,c_1)$; $b_4:(a_3,c_3)$\\
$c_5$ & $r\ne q$ & $a_3=r$
 & $a_4:(b_5,a_5)$; $b_2:(a_1,c_1)$\\
\end{longtable}
\endgroup

Table~\ref{tab:M2P5-second-response} is exhaustive because $\omega,a_2,c_2$ are already colored. Every response of color $r$ copies Bob at a nonadjacent vertex. If $r=q$, the response is instead at $a_4$ or $b_2$, neither of which is adjacent to $a_2$ or $c_2$; if $r=p$, no response vertex is adjacent to
$\omega$. Thus all responses are legal.

It remains only to verify the certificates. In each row, the patients shown in the last column are exactly those vertices of $D_5$ that are neither colored nor made safe by the displayed pair of vertices with the same color; this verifies patient completeness directly from \eqref{eq:M2P5-neighborhoods}. Boundary vertices $a_1,a_5,b_1,b_5$ and all vertices $c_i$ have degree at most three. Any listed internal doctor $a_3$ has two already colored neighbors of the same color: these are either $a_2,b_2$, $a_2,a_4$, or $a_4,b_2$, as the corresponding row shows. Hence every listed doctor is safe. The lists in each row are disjoint, contain no colored vertex, and each
doctor lies in the neighborhood of its patient by \eqref{eq:M2P5-neighborhoods}. Therefore every row satisfies Lemma~\ref{lem:double-doctor}. Alice wins with four colors, proving $\chi_g(M_2(P_5))\leq4$. Together with the lower bound, this gives the claimed equality.
\end{proof}

\begin{theorem}
\label{thm:M2P6-exact}
$\chi_g(M_2(P_6))=4$.
\end{theorem}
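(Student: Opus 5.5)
The lower bound $\chi_g(M_2(P_6))\geq4$ is Theorem~\ref{thm:path-values}(4) with $n=6$, so only the four-color upper bound needs proof. I will follow the template of Theorem~\ref{thm:M2P5-exact}. Write $a_i=v_i^0$, $b_i=v_i^1$, $c_i=v_i^2$ for $1\leq i\leq6$, with neighborhoods given by the same formulas as \eqref{eq:M2P5-neighborhoods}, indices now restricted to $\{1,\dots,6\}$. Once $\omega$ is colored, the only vertices of degree four, and hence the only candidate patients, are
\[
D_6=\{a_2,a_3,a_4,a_5,b_2,b_3,b_4,b_5\}.
\]
Alice colors $\omega$ with $p$ on her first move. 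The reflection $i\mapsto7-i$ is an automorphism, so it suffices to treat Bob's first move $q$ in columns $1,2,3$.

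Alice's first response will again be a copying move: she colors a nonadjacent, non-root-adjacent vertex with the same color $q$, chosen so that the two equal colors make as many vertices of $D_6$ safe as possible. The natural pairs are those sharing two neighbors in $D_6$. For example, $a_1$ or $b_1$ or $c_1$ is answered at $a_3$, $a_2/b_2/c_2$ mirror the $P_5$ choices, and $a_3$ is answered at $a_5$ or $b_3$, and so on. Since $D_6$ has eight vertices rather than six, I expect three or four patients to remain in most rows. For each row I will tabulate a patient audit, as in Table~\ref{tab:M2P5-patient-audit}, and a double-doctor certificate. The doctors will be drawn mainly from the boundary vertices $a_1,b_1,a_6,b_6$, which have degree at most three, and from the $c_i$, which have degree at most three. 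Internal doctors are used only when they have two equally colored neighbors. Lemma~\ref{lem:double-doctor} then finishes each row. Legality of each response will follow because the copied vertex is nonadjacent to Bob's vertex and, when $q=p$ is impossible at it, not adjacent to $\omega$.

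The main obstacle is doctor scarcity. Each patient $b_i$ with $3\leq i\leq4$ has neighbors $a_{i\pm1},c_{i\pm1}$, and each $a_i$ in the middle has only interior neighbors. A row may therefore leave four patients, such as $a_3,a_4,b_3,b_4$, competing for the same few safe neighbors, and no disjoint assignment exists. For such rows, analogous to the exceptional case $a_2=c_2$ for $P_5$, I will defer, exactly as in Table~\ref{tab:M2P5-second-response}. I will enumerate Bob's second move $z=r$ up to the remaining symmetry. Alice then replies either by copying $r$ at a vertex sharing two neighbors with $z$, or by coloring a central patient such as $a_3$ or $a_4$ with a legal color that duplicates an existing color in some neighborhood. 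Each resulting position is then checked for a certificate.

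I expect the first response to need to center on the middle: answering moves near the boundary by coloring $a_3$ or $a_4$ with the same color, which simultaneously makes $a_2,b_2$ (or $a_5,b_5$) safe and frees the $c_i$ as doctors for the $b$-patients. If some first-move case still resists, my fallback is to choose the first response as a color duplication at distance two across the center, e.g.\ $a_3=a_5$. That choice makes $a_4$ and $b_4$ safe and reduces the patient set to vertices adjacent to the boundary, where boundary doctors are plentiful.
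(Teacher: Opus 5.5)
Your framework is the paper's: the lower bound from Theorem~\ref{thm:path-values}(4), Alice coloring $\omega$ first, a copying first response, and closing with Lemma~\ref{lem:double-doctor}, with hard positions deferred to one more exchange. But here the framework is not the proof. Everything rests on the finite verification, and your proposal contains none of it.

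Your expectation that deferral will be the exceptional case is also wrong for $P_6$. With the responses you commit to near the boundary, the center jams. After $a_1=a_3=q$ the vertices $a_2,b_2$ are safe, but the patients are $a_4,a_5,b_3,b_4,b_5$. Then $a_4$, whose neighbors are $a_3,a_5,b_3,b_5$, has no eligible doctor at all. The same happens after $b_1=a_3$, $c_1=a_3$ and $c_3=a_3$. Mirroring the $P_5$ choices fails too: $a_2=b_2$ leaves $a_4$ with the single safe neighbor $a_3$, and $a_2=c_2$ leaves $a_3$ with none. In the paper only three first-move types ($a_2=a_4$, $a_3=a_5$, $a_3=b_3$) close immediately. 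The six positions arising from Bob's first moves at $a_1,b_1,b_2,c_1,c_2,c_3$ all need a second exchange.

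So the deferred step is the bulk of the proof, and there you offer only heuristics ("copy $r$ near $z$, or color a central patient"). What is missing is, for each deferred position and every uncolored $z$:
\begin{enumerate}
\item a specific reply, whose choice depends on whether $r=q$ (with $r\neq p$ forced at the $c_i$);
\item a check that the reply is legal;
\item a verified double-doctor certificate afterwards.
\end{enumerate}
Without this, nothing rules out a second move by Bob after which every reply by Alice leaves some patient with fewer than two available doctors.

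The paper supplies this in Table~\ref{tab:M2P6-second-response}. There Alice's second reply typically either colors $a_4$ itself or creates a second equal-color pair such as $a_3=a_5=q$ or $a_5=b_3=r$, which makes $a_4$ safe. The paper then checks the certificates with a greedy assignment (fixed patient order, ordered neighbor lists) audited in Table~\ref{tab:M2P6-greedy-audit}.

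Your fallback of a duplication across the center, such as $a_3=a_5$, works as a first response only when Bob opens at a suitable vertex; $a_2\to a_4$ and $a_3\to a_5$ are indeed the paper's choices. After a boundary opening such as $a_1$, Alice's single copy cannot produce it. It becomes available only on the second exchange, which is exactly where the paper uses it.
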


\begin{proof}
The lower bound follows from Theorem~\ref{thm:path-values}(4). We prove the upper bound by giving Alice a four-color strategy.

Write
\[
 a_i=v_i^0,\qquad b_i=v_i^1,\qquad c_i=v_i^2
 \qquad(1\leq i\leq6),
\]
and denote the root by $\omega$. The neighborhoods are
\begin{equation}
\label{eq:M2P6-neighborhoods}
\begin{aligned}
 N(a_i)&=\{a_{i-1},a_{i+1},b_{i-1},b_{i+1}\},\\
 N(b_i)&=\{a_{i-1},a_{i+1},c_{i-1},c_{i+1}\},\\
 N(c_i)&=\{b_{i-1},b_{i+1},\omega\},\\
N(\omega)&=\{c_1,\ldots,c_6\}.
\end{aligned}
\end{equation}
In these formulas, terms with indices outside $\{1,\ldots,6\}$ are omitted. After $\omega$ has been colored, the only vertices that can become patients are
\begin{equation}
\label{eq:M2P6-patients}
 D_6=\{a_2,a_3,a_4,a_5,b_2,b_3,b_4,b_5\}.
\end{equation}

Alice first colors $\omega$ with color $p$. By the reflection $i\mapsto7-i$, it is enough to consider Bob's first move in columns $1$, $2$, and $3$. If Bob uses color $q$, Alice assigns the same color to the vertex specified in Table~\ref{tab:M2P6-first-response}.

\begin{table}[htbp]
\centering
\caption{Alice's first response in $M_2(P_6)$.}
\label{tab:M2P6-first-response}
\small
\begin{tabular}{c|ccccccccc}
\toprule
Bob & $a_1$&$a_2$&$a_3$&$b_1$&$b_2$&$b_3$&$c_1$&$c_2$&$c_3$\\
\midrule
Alice & $a_3$&$a_4$&$a_5$&$a_3$&$a_2$&$a_3$&$a_3$&$a_4$&$a_3$\\
\bottomrule
\end{tabular}
\end{table}

Every pair in the table is nonadjacent. Since only $\omega$ and Bob's vertex were colored before Alice's response, every listed response is legal. Three types already have a double-doctor certificate; the complete certificates are displayed in Table~\ref{tab:M2P6-immediate-certificates}.

\begin{table}[htbp]
\centering
\caption{Double-doctor certificates after Alice's first response in $M_2(P_6)$.}
\label{tab:M2P6-immediate-certificates}
\footnotesize
\begin{tabular}{c|c|p{0.58\textwidth}}
\toprule
vertices with the same color & patients & doctors\\
\midrule
$a_2=a_4=q$
& $a_5,b_2,b_4,b_5$
& $a_5:(a_6,b_6)$; $b_2:(a_1,c_1)$;
  $b_4:(a_3,c_3)$; $b_5:(c_4,c_6)$\\
\addlinespace
$a_3=a_5=q$
& $a_2,b_2,b_3,b_5$
& $a_2:(a_1,b_1)$; $b_2:(c_1,c_3)$;
  $b_3:(c_2,a_4)$; $b_5:(c_4,a_6)$\\
\addlinespace
$a_3=b_3=q$
& $a_5,b_2,b_4,b_5$
& $a_5:(a_4,a_6)$; $b_2:(a_1,c_1)$;
  $b_4:(c_3,c_5)$; $b_5:(c_4,c_6)$\\
\bottomrule
\end{tabular}
\end{table}

Indeed, the following equalities ensure that the indicated middle vertices are safe:
\[
\begin{aligned}
 a_2=a_4=q&\Longrightarrow a_3,b_3\text{ are safe},\\
 a_3=a_5=q&\Longrightarrow a_4,b_4\text{ are safe},\\
 a_3=b_3=q&\Longrightarrow a_2,a_4\text{ are safe}.
\end{aligned}
\]
Together with the two colored members of each pair, these safe vertices form the complement of the patient set displayed in Table~\ref{tab:M2P6-immediate-certificates} within the set $D_6$ defined in \eqref{eq:M2P6-patients}. Hence that table lists every patient. Each doctor pair lies in the appropriate neighborhood by \eqref{eq:M2P6-neighborhoods}; its vertices are uncolored and safe, and the pairs in each row are disjoint. Thus the table supplies complete double-doctor certificates, not only candidate doctor lists. Lemma~\ref{lem:double-doctor} now finishes these three types.

It remains to consider Bob's first move at $a_1,b_1,b_2,c_1,c_2$, or $c_3$.  Let $R_x$ denote the position after Bob first colors $x$ with $q$ and Alice makes the response from Table~\ref{tab:M2P6-first-response}. Suppose that Bob next colors $z$ with $r$. Alice's second response is given in Table~\ref{tab:M2P6-second-response}. In the table, $u(r)$ and $u(q)$ indicate that Alice colors $u$ with $r$ and $q$, respectively.

\begingroup
\footnotesize
\renewcommand{\arraystretch}{1.08}
\begin{longtable}{c|p{0.28\textwidth}|p{0.43\textwidth}}
\caption{Alice's second response in the six remaining types of $M_2(P_6)$.}
\label{tab:M2P6-second-response}\\
\toprule
position & Bob's second vertex $z$ & Alice's response\\
\midrule
\endfirsthead
\toprule
position & Bob's second vertex $z$ & Alice's response\\

\endhead
\bottomrule
\endfoot
$R_{a_1}$
& $a_2,b_2,b_4$ & $a_4(r)$\\
& $a_4$ & $a_2(r)$\\
& $a_5$ & $b_3(r)$\\
& $a_6,b_6$ & $b_3(q)$ if $r=q$; $a_4(r)$ otherwise\\
& $b_1$ & $a_5(q)$ if $r=q$; $a_4(r)$ otherwise\\
& $b_3,b_5$ & $a_5(r)$\\
& $c_1,c_2,c_4,c_6$ & $a_5(q)$ if $r=q$; $a_4(r)$ otherwise\\
& $c_3,c_5$ & $a_5(q)$\\

$R_{b_1}$
& $a_1$ & $a_5(q)$\\
& $a_2,b_2,b_4$ & $a_4(r)$\\
& $a_4$ & $a_2(r)$\\
& $a_5$ & $b_3(r)$\\
& $a_6,b_6$ & $b_3(q)$ if $r=q$; $a_4(r)$ otherwise\\
& $b_3,b_5$ & $a_5(r)$\\
& $c_1,c_3,c_5$ & $a_5(q)$\\
& $c_2$ & $a_4(r)$\\
& $c_4,c_6$ & $a_5(q)$ if $r=q$; $a_4(r)$ otherwise\\
\midrule
$R_{b_2}$
& $a_1,b_1$ & $a_4(q)$\\
& $a_3$ & $a_5(r)$\\
& $a_4$ & $b_4(r)$\\
& $a_5$ & $b_5(q)$ if $r=q$; $a_3(r)$ otherwise\\
& $a_6,b_4,b_6$ & $a_4(r)$\\
& $b_3$ & $a_3(r)$\\
& $b_5$ & $a_5(q)$ if $r=q$; $a_3(r)$ otherwise\\
& $c_1,c_3$ & $a_4(q)$\\
& $c_2,c_4,c_6$ & $a_4(r)$\\
& $c_5$ & $a_4(q)$\\
\midrule
$R_{c_1}$
& $a_1,b_1$ & $b_3(q)$\\
& $a_2,b_4$ & $a_4(r)$\\
& $a_4$ & $a_2(r)$\\
& $a_5$ & $b_3(r)$\\
& $a_6,b_6$ & $b_3(q)$ if $r=q$; $a_4(r)$ otherwise\\
& $b_2$ & $a_5(q)$\\
& $b_3,b_5$ & $a_5(r)$\\
& $c_2,c_4,c_6$ & $a_5(q)$ if $r=q$; $a_4(r)$ otherwise\\
& $c_3,c_5$ & $a_5(q)$\\
\midrule
$R_{c_2}$
& $a_1$ & $b_4(q)$ if $r=q$; $a_3(r)$ otherwise\\
& $a_2$ & $b_2(r)$\\
& $a_3$ & $a_5(r)$\\
& $a_5,b_1,b_3,b_5$ & $a_3(r)$\\
& $a_6,b_6$ & $a_2(q)$ if $r=q$; $b_4(q)$ otherwise\\
& $b_2,b_4$ & $a_2(r)$\\
& $c_1,c_3,c_5$ & $a_2(q)$ if $r=q$; $a_3(r)$ otherwise\\
& $c_4,c_6$ & $a_2(q)$ if $r=q$; $b_4(q)$ otherwise\\
\midrule
$R_{c_3}$
& $a_1,b_1$ & $b_3(q)$\\
& $a_2,b_2,b_4$ & $a_4(r)$\\
& $a_4$ & $a_2(r)$\\
& $a_5$ & $b_3(r)$\\
& $a_6,b_6$ & $b_3(q)$ if $r=q$; $a_4(r)$ otherwise\\
& $b_3,b_5$ & $a_5(r)$\\
& $c_1,c_2,c_4,c_5,c_6$ & $a_5(q)$ if $r=q$; $a_4(r)$ otherwise\\
\end{longtable}
\endgroup

For each position $R_x$, the sets listed in the second column of Table~\ref{tab:M2P6-second-response} form a partition of the vertices that remain uncolored after Alice's first response. Hence the table is exhaustive. The legality of every response follows directly from \eqref{eq:M2P6-neighborhoods}. For an entry $u(r)$, the vertex $u$ is uncolored and is nonadjacent to $z$ and to every vertex that was previously assigned color $r$. For an entry $u(q)$, the vertex $u$ is uncolored and is nonadjacent to every vertex that currently has color $q$, including $z$ when $r=q$. Thus every response prescribed in the table is legal.

Table~\ref{tab:M2P6-doctor-order} gives the candidate order used to construct a double-doctor certificate. After each row of Table~\ref{tab:M2P6-second-response}, process the remaining patients in the fixed order
\[
 b_3,b_4,a_5,a_4,a_2,b_2,b_5,a_3,
\]
and assign to each patient the first two vertices in its list that are uncolored, safe, and not already used as doctors.

\begin{table}[H]
\centering
\caption{Ordered candidate lists for assigning doctors in $M_2(P_6)$.}
\label{tab:M2P6-doctor-order}
\small
\begin{tabular}{c|l@{\qquad}c|l}
\toprule
patient & candidate order & patient & candidate order\\
\midrule
$b_3$ & $c_2,a_2,a_4,c_4$ & $b_4$ & $a_5,c_3,c_5,a_3$\\
$a_5$ & $b_4,a_4,b_6,a_6$ & $a_4$ & $a_5,b_5,b_3,a_3$\\
$a_2$ & $b_3,a_3,b_1,a_1$ & $b_2$ & $c_1,a_1,a_3,c_3$\\
$b_5$ & $c_6,a_6,a_4,c_4$ & $a_3$ & $a_4,b_4,b_2,a_2$\\
\bottomrule
\end{tabular}
\end{table}

Each list in Table~\ref{tab:M2P6-doctor-order} is the neighborhood of its patient, written in the prescribed order. Table~\ref{tab:M2P6-greedy-audit} verifies the selection procedure defined by the fixed order of the patients and the ordered candidate lists. An entry in this table is the minimum number of eligible candidates remaining when the corresponding patient is reached, taken over every legal second move in the indicated position and over all legal relations among $p$, $q$, and $r$. A dash indicates that the corresponding vertex is never a patient after Alice's second response.

\begin{table}[H]
\centering
\caption{Verification matrix for the doctor selection procedure.}
\label{tab:M2P6-greedy-audit}
\small
\begin{tabular}{c|cccccccc}
\toprule
position & $b_3$ & $b_4$ & $a_5$ & $a_4$ & $a_2$ & $b_2$ & $b_5$ & $a_3$\\
\midrule
$R_{a_1}$ & 3 & 2 & 2 & -- & -- & -- & 2 & --\\
$R_{b_1}$ & 3 & 2 & 2 & -- & -- & 2 & 2 & --\\
$R_{b_2}$ & 2 & 2 & 2 & -- & -- & -- & 2 & --\\
$R_{c_1}$ & 2 & 2 & 2 & -- & 2 & -- & 2 & --\\
$R_{c_2}$ & -- & 2 & 2 & -- & 3 & 2 & 2 & --\\
$R_{c_3}$ & 2 & -- & 3 & -- & 2 & -- & 2 & --\\
\bottomrule
\end{tabular}
\end{table}

We now describe the verification procedure used to obtain Table~\ref{tab:M2P6-greedy-audit}. For each case listed in Table~\ref{tab:M2P6-second-response}, let $C$ be the set of the five vertices that have already been colored: the root, the vertices colored on Bob's two moves, and the vertices colored on Alice's two response moves. After Alice's second response, define
\[
 P=\{x\in D_6:x\text{ is unsafe}\}.
\]
By \eqref{eq:M2P6-patients} and Definition~\ref{def:safe}, this is exactly the set of patients in the current position.

Process the vertices of $P$ in the fixed order displayed above. For the current patient $x$, examine the vertices in its candidate list in Table~\ref{tab:M2P6-doctor-order} in the prescribed order, excluding
every vertex that belongs to $C$, is unsafe, or has already been assigned as a doctor to an earlier patient. Since the list in Table~\ref{tab:M2P6-doctor-order} is the neighborhood of $x$, every remaining vertex is an uncolored safe neighbor of $x$ that has not previously been used as a doctor.

For each initial position, the corresponding entry in Table~\ref{tab:M2P6-greedy-audit} is the minimum number of eligible candidates remaining when the indicated patient is processed, where the minimum is taken over all legal choices of Bob's second move and all admissible relations among the colors $p$, $q$, and $r$. Applying this procedure to every case in Table~\ref{tab:M2P6-second-response} yields the six rows of the verification matrix.

As an illustration of this procedure, consider the position $R_{a_1}$ in which Bob colors $c_3$ and Alice responds by coloring $a_5$ with color $q$. Since $a_3=a_5=q$, the vertices $a_4$ and $b_4$ are safe. Hence the exact patient set is $\{b_3,b_5\}$. Following the prescribed order, the procedure assigns
\[
 D(b_3)=\{a_2,c_2\}
 \qquad\text{and}\qquad
 D(b_5)=\{a_6,c_6\}.
\]

Every numerical entry in Table~\ref{tab:M2P6-greedy-audit} is at least two. Therefore, whenever a patient is processed, at least two eligible candidates remain, and the procedure can assign two doctors to that patient. Since every vertex already assigned as a doctor is excluded from all subsequent assignments, the sets $D(x)$ are pairwise disjoint. Moreover, every assigned doctor is an uncolored safe neighbor of its patient. Since every patient is unsafe whereas every doctor is safe, no patient is a doctor. Thus the resulting assignment is a double-doctor certificate.

Consequently, Alice establishes a double-doctor certificate no later than her third move. Lemma~\ref{lem:double-doctor} gives
\[
 \chi_g(M_2(P_6))\leq4.
\]
Together with the lower bound, this proves
\[
 \chi_g(M_2(P_6))=4.
\]
\end{proof}

Theorems~\ref{thm:M2P5-exact} and~\ref{thm:M2P6-exact} show that the general lower bound in Theorem~\ref{thm:path-values}(4) is attained when $k=2$ and $n\in\{5,6\}$. In both proofs, the boundary vertices of the path are used in assigning doctors. We now turn to cycles. The uniform strategy with five colors remains valid, whereas the configurations used to establish the lower bound must be adapted to the cyclic structure.

\section{The coloring game on generalized Mycielski graphs of cycles}
\label{sec:cycles}

In this section, we apply the structural lemmas from Section~\ref{sec:general-tools} to establish bounds for generalized Mycielski graphs of cycles.

\begin{theorem}
\label{thm:cycle-values}
For $k\geq2$, the following statements hold for generalized Mycielski graphs of cycles:

\begin{enumerate}[label={\upshape(\arabic*)},
                  nosep,
                  leftmargin=*]
\item $4\leq\chi_g(M_k(C_3))\leq5$;
\item $\chi_g(M_k(C_4))=3$;
\item $4\leq\chi_g(M_k(C_n))\leq5$ for $n\geq5$.
\end{enumerate}
\end{theorem}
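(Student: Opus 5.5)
All three upper bounds of five follow at once from Theorem~\ref{thm:uniform-five}. The work is therefore in the lower bounds for $C_3$ and for $n\geq5$, and in the exact value $3$ for $C_4$. Throughout, $V_i^\ast$ denotes the copies of $v_i^0$, and indices are taken modulo $n$.

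\emph{Part (2).} $M_k(C_4)$ contains $M_k(P_2)\cong C_{2k+3}$ as an induced odd cycle, so $\chi_g\geq\chi\geq3$. For the upper bound I would reuse the pairing strategy of Theorem~\ref{thm:path-values}(3). Alice colors $\omega$ first. She then pairs $\{v_1^m,v_3^m\}$ and $\{v_2^m,v_4^m\}$ in every layer $m$, and whenever Bob colors one vertex of a pair, she copies his color on its mate. In $C_4$ the vertices $v_1$ and $v_3$ have identical neighborhoods $\{v_2,v_4\}$, and the same holds for $v_2,v_4$. Hence each pair has identical open neighborhoods in $M_k(C_4)$, and copying is always legal. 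Each non-root vertex has its neighborhood inside the union of two pairs, or inside one pair together with $\omega$. Under the invariant, each pair contributes at most one color, so no uncolored vertex ever sees all three colors.

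\emph{Parts (1) and (3), lower bounds.} With three colors, I would show that Bob wins by producing a Type~1 configuration after Alice's first move and then invoking Lemma~\ref{lemma1}, mirroring Theorem~\ref{thm:path-values}(4).
\begin{itemize}
\item If Alice colors $v_i^m$ with $n\geq5$, the path table transfers with $s=1$. By cyclic symmetry the boundary choice of $s$ is no longer needed.
\item If Alice colors $\omega$, I would use the 4-cycle $(v_{i-1}^k,v_i^{k-1},v_{i+1}^k,\omega)$ with attack vertices $x_1=v_{i-2}^{k-1}$ and $x_3=v_{i+2}^{k-1}$.
\end{itemize}
The main check is that the closed neighborhoods of $x_1$ and $x_3$ stay disjoint once the cycle wraps around. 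For $n\geq7$ the index sets $\{i-3,i-2,i-1\}$ and $\{i+1,i+2,i+3\}$ are disjoint mod $n$, so this is routine. For $n=5,6$ they overlap, and I would verify the closed neighborhoods directly, possibly choosing attack vertices in different layers. This is where Lemma~\ref{lem:two-layer-fork} enters for $C_6$, which the paper flags as exceptional when the root is colored first. In that case Bob colors $v_{i}^0$ and then $v_{i+2}^0$ with different colors, or he replies so as to reach the hypotheses of that lemma, and wins by the double threat between layers $0/1$ and $1/2$. For $C_5$ and $C_6$ with Alice's move in a layer, the analogous overlaps can be avoided by taking $x_1$ two layers below and $x_3$ two layers above $u_4$, as in the table. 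Since the layers separate neighborhoods, index overlap does not matter there.

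\emph{Part (1).} For $C_3$, every pair of base vertices is adjacent, so $v_j^m$ is adjacent to $v_l^{m\pm1}$ for all $l\neq j$. I would take the 4-cycle $v_1^m v_2^{m+1} v_1^{m+2}$ … carefully, or more simply use an odd-structure double threat. After Alice colors a vertex, Bob colors a vertex at distance two in the same column group, so that two vertices share both colored neighbors. He then uses Lemma~\ref{lem:separated-double-threat}, with attack vertices separated by at least two layers.

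\emph{Main obstacle.} The hardest part will be the small cases: $C_3$, and $C_5,C_6$ with the root colored first. There the cyclic wrap-around destroys the disjointness needed for the double-threat lemma, so each configuration must be found and checked by hand, with special care when $k=2$ and few layers are available.
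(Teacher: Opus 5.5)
Your part (2), the five-color upper bounds, the non-root case of part (3) and your root configuration for $n\geq7$ all agree with the paper. The non-root case uses the path table with $s=1$; in its row $m=k$ the disjointness in layer $k-1$ comes from $n\neq3$ rather than from layer separation, which is harmless. Your root configuration is the paper's shifted by one index.

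\textbf{Gap 1: the lower bound in part (1).} You give no argument, and the mechanism you sketch cannot work in $M_k(C_3)$. For $x\in V^m$ and $y\in V^{m+2}$, the sets $N(x)\cap V^{m+1}$ and $N(y)\cap V^{m+1}$ are two $2$-element subsets of a $3$-element layer, so they always meet. Hence attack vertices two layers apart have overlapping response regions (while the shared vertex is uncolored), and Lemma~\ref{lem:separated-double-threat} does not apply. Concretely, suppose Alice colors $v_1^m$ and Bob colors $v_1^{m+2}$ with another color. The only possible attack vertices for the targets $v_2^{m+1},v_3^{m+1}$ are $v_2^m,v_3^m,v_2^{m+2},v_3^{m+2}$, and all four are adjacent to $v_1^{m+1}$. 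Alice colors $v_1^{m+1}$ with the third color, and both targets keep that color for the rest of the game. The missing idea is that no game analysis is needed: $\chi(M_k(C_3))=4$ (the paper cites Lam et al.). With three colors no proper coloring exists, so $\chi_g\geq\chi=4$. This is exactly the paper's proof of (1), with $\Delta+1=5$ for the upper bound.

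\textbf{Gap 2: $n=6$ with the root colored first.} Your configuration genuinely fails here. The vertex $v_{i+3}^k$ lies in both $N[v_{i-2}^{k-1}]$ and $N[v_{i+2}^{k-1}]$, and if Alice colors it with the third color, both threats are defused permanently.

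Naming Lemma~\ref{lem:two-layer-fork} is not enough, because Alice moves between Bob's two base-layer moves. You must show that Bob reaches the lemma's hypotheses against every intervening move; ``he replies so as to reach the hypotheses of that lemma'' is precisely the unproved step. The paper does this as follows.
\begin{itemize}
\item After Alice colors $\omega$ with $0$, Bob colors $v_0^0$ with the same color $0$. This is legal because $\omega$ has no neighbor in $V^0$. It keeps colors $1,2$ symmetric, and it guarantees that the eventual threat color differs from $\varphi(\omega)$, so that color stays legal at $v_q^2$ even when $k=2$.
\item For each move $z$ of Alice, up to reflection and color renaming, a table fixes the side $\varepsilon=\pm1$, the color $\beta$ of $v_{2\varepsilon}^0$, and the attack indices $p,q\in\{0,2\varepsilon\}$.
\end{itemize}
These choices genuinely depend on $z$. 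If Alice occupies a target such as $v_1^0$ or $v_1^1$, Bob switches to $\varepsilon=-1$. If she colors $v_3^0$ or $v_3^1$ with $1$, color $1$ is illegal at $v_{\pm2}^0$ and Bob uses $\beta=2$. If she colors $v_2^1$, Bob uses $p=0$.

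Finally, $n=5$ is not exceptional, and your configuration already works there. A common vertex of $N[v_{i-2}^{k-1}]$ and $N[v_{i+2}^{k-1}]$ would have to lie either in layer $k-1$ (indices $i\mp2$) or in layer $k-2$ or $k$ (indices $\{i-3,i-1\}$ versus $\{i+1,i+3\}$). That forces $n\mid2$, $n\mid4$ or $n\mid6$. Your comparison of index sets while ignoring layers is too coarse.
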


\begin{proof}
\noindent (1) $\boldsymbol{4 \leq \chi_g(M_k(C_3)) \leq 5.}$ \vspace{0.5em}

The generalized Mycielski construction gives $\chi(M_k(C_3))=4$ \cite{Lam2003}, and $\Delta(M_k(C_3))=4$. Therefore $\chi(G)\leq\chi_g(G)\leq\Delta(G)+1$ yields $4\leq\chi_g(M_k(C_3))\leq5$.
 \vspace{0.5em}

\noindent (2) $\boldsymbol{\chi_g(M_k(C_4))=3.}$ \vspace{0.5em}

\begin{itemize}
\item \textit{Lower bound}: The graph contains $M_k(P_2)\cong C_{2k+3}$ as a subgraph, so it is not bipartite. Hence $\chi_g(M_k(C_4))\geq\chi(M_k(C_4))\geq3$.

\item \textit{Upper bound}: Let $\lvert X\rvert=3$. Alice first colors $\omega$ with color~$1$ and then uses the following strategy.
\end{itemize}

\begin{enumerate}[label=(A\arabic*), labelindent=3.5em]
\item If Bob colors $v_1^m$ (or $v_3^m)$ for $m=0,\dots,k$ with color $\alpha$, then Alice colors $v_3^m$ (or $v_1^m)$ with the same color.

\item If Bob colors $v_2^m$ (or $v_4^m)$ for $m=0,\dots,k$ with color $\alpha$, then Alice colors $v_4^m$ (or $v_2^m)$ with the same color.
\end{enumerate}

Opposite vertices in a fixed layer have identical open neighborhoods, so every copying move is legal. After Alice's moves, every opposite pair is either uncolored or colored with the same color. The neighborhood of each non-root vertex is the union of at most two such pairs, with the already colored root replacing one pair at the top layer. Immediately after a Bob move, at most one pair is incomplete. Hence an uncolored vertex sees at most two colors and can never be blocked when three colors are available. Thus Alice wins. \vspace{0.5em}

\noindent (3) $\boldsymbol{4\leq\chi_g(M_k(C_n))\leq5
\quad(n\geq5).}$

For the lower bound, suppose that only three colors are available. All subscripts in what follows are taken modulo $n$.

First suppose that Alice colors a non-root vertex $v_i^m$. Table~\ref{tab:cycle-type1-configurations} gives a Type~1 configuration; the second column is the ordered cycle $u_1u_2u_3u_4u_1$, where $u_4=v_i^m$ is Alice's vertex.

\begin{table}[htbp]
\centering
\caption{Type~1 configurations in $M_k(C_n)$ after Alice colors a non-root vertex.}
\label{tab:cycle-type1-configurations}
\footnotesize
\begin{tabular}{c|c|c|c}
\toprule
$m$ & $(u_1,u_2,u_3,u_4)$ & $x_1$ & $x_3$\\
\midrule
$0$
& $(v_{i+1}^0,v_{i+2}^0,v_{i+1}^1,v_i^0)$
& $v_i^1$ & $v_i^2$\\
$1\leq m\leq k-2$
& $(v_{i+1}^{m-1},v_{i+2}^m,v_{i+1}^{m+1},v_i^m)$
& $v_i^{\max\{0,m-2\}}$ & $v_i^{m+2}$\\
$k-1$
& $(v_{i+1}^{k-2},v_{i+2}^{k-1},v_{i+1}^k,v_i^{k-1})$
& $v_i^{\max\{0,k-3\}}$ & $\omega$\\
$k$
& $(\omega,v_{i+2}^k,v_{i+1}^{k-1},v_i^k)$
& $v_{i+1}^k$ & $v_i^{k-2}$\\
\bottomrule
\end{tabular}
\end{table}

As before, the second data row of Table~\ref{tab:cycle-type1-configurations} is empty when $k=2$. Since $n\geq5$, the definition of $M_k(C_n)$ gives
\[
 N[x_1]\cap N[x_3]=\varnothing
\]
in every row. Lemma~\ref{lemma1} therefore applies.

Next suppose that Alice colors $\omega$. If $n=5$ or $n\geq7$, take
\[
 (u_1,u_2,u_3,u_4)
 =(v_i^k,v_{i+1}^{k-1},v_{i+2}^k,\omega),
 \qquad
 (x_1,x_3)=(v_{i-1}^{k-1},v_{i+3}^{k-1}).
\]
The two closed neighborhoods are disjoint: an intersection would force one of the congruences $4\equiv0$ or $6\equiv0\pmod n$, neither of which is possible for $n=5$ or $n\geq7$. Hence Lemma~\ref{lemma1} again gives Bob a win.

It remains to treat $n=6$ when Alice colors the root. Rename the colors so that $\varphi(\omega)=0$. Bob colors $v_0^0$ with color $0$, which is legal because the root is not adjacent to the base layer. Let Alice's next move be at $z$. Colors $1$ and $2$ are still symmetric, so if $\varphi(z)\ne0$ we rename them so that $\varphi(z)=1$. Up to the reflection $i\mapsto-i\pmod6$, Bob's response is specified in Table~\ref{tab:C6-root-response}. An entry $(\varepsilon,\beta,p,q)$ means that Bob colors $v_{2\varepsilon}^0$ with $\beta$ and uses $v_p^1,v_q^2$ as the two attack vertices.

\begin{table}[htbp]
\centering
\caption{Bob's responses after Alice colors the root in $M_k(C_6)$.}
\label{tab:C6-root-response}
\footnotesize
\begin{tabular}{c|c|c}
\toprule
$z$ & $\varphi(z)$ & $(\varepsilon,\beta,p,q)$\\
\midrule
$v_1^0$ & $1$ & $(-1,1,4,0)$\\
$v_2^0$ & $0\text{ or }1$ & $(-1,1,4,0)$\\
$v_3^0$ & $0$ & $(1,1,2,0)$\\
$v_3^0$ & $1$ & $(1,2,0,0)$\\
$v_0^1$ & $0\text{ or }1$ & $(1,1,2,0)$\\
$v_1^1$ & $1$ & $(-1,1,4,0)$\\
$v_2^1$ & $0\text{ or }1$ & $(1,1,0,0)$\\
$v_3^1$ & $0$ & $(1,1,2,0)$\\
$v_3^1$ & $1$ & $(1,2,2,0)$\\
$v_0^2$ & $0\text{ or }1$ & $(1,1,2,2)$\\
$v_r^2$, $1\leq r\leq3$ & $0\text{ or }1$ & $(1,1,2,0)$\\
$v_r^m$, $3\leq m\leq k$, $0\leq r\leq3$
& $0\text{ or }1$ & $(1,1,2,0)$\\
\bottomrule
\end{tabular}
\end{table}

Rows of Table~\ref{tab:C6-root-response} corresponding to illegal moves are ignored. If $z$ has index $4$ or $5$, reflect every index in the corresponding row. If Alice's nonzero color was originally $2$, interchange the names of colors $1$ and $2$ in that row.

For each entry in Table~\ref{tab:C6-root-response}, Bob's displayed move is legal. The two target vertices are $v_{\varepsilon}^0$ and $v_{\varepsilon}^1$, and the third color is legal at the uncolored vertices $v_p^1$ and $v_q^2$. If Bob's move has already made a target uncolorable, he wins immediately; otherwise Lemma~\ref{lem:two-layer-fork} applies. The table covers all legal moves by Alice, so Bob also wins this exceptional case. This proves the lower bound $\chi_g(M_k(C_n))\geq4$ for every $n\geq5$.

For the upper bound, apply the cycle case of Theorem~\ref{thm:uniform-five}: Alice colors $\omega$ first, after which every uncolored vertex has degree at most four and therefore has a legal color among the five available colors. \qedhere

\end{proof}

\section*{Conflicts of interest}
The authors declare no conflict of interest.

\section*{Acknowledgments}

This work was supported by the National Natural Science Foundation of China (Grant No.~12461006), and the Guizhou Provincial Basic Research Program (Grant Nos.~QN[2025]020, ZD[2025]085, MS[2026]141, KJLYRC[2026]091).


\begin{thebibliography}{99}
\expandafter\ifx\csname url\endcsname\relax
  \def\url#1{\texttt{#1}}\fi
\expandafter\ifx\csname urlprefix\endcsname\relax\def\urlprefix{URL }\fi
\expandafter\ifx\csname href\endcsname\relax
  \def\href#1#2{#2} \def\path#1{#1}\fi

\bibitem{Alagammai2019}
{R. Alagammai, V. Vijayalakshmi, Game chromatic number and game chromatic index of the Mycielski graphs of some families of graphs, Appl. Math. Inf. Sci. 13(S1) (2019) 261--265.}

\bibitem{Bartnicki2007}
{T. Bartnicki, J. Grytczuk, H.A. Kierstead, X. Zhu, The map-coloring game, Amer. Math. Monthly 114(9) (2007) 793--802.}

\bibitem{Bodlaender1991}
{H.L. Bodlaender, On the complexity of some coloring games, Int. J. Found. Comput. Sci. 2 (1991) 133--147.}

\bibitem{Chamberlin2019}
{C. Chamberlin, J. DeCapua, H. Elser, D. Gerraputa, A. Hamm, On game chromatic number analogues of Mycielsians and Brooks' Theorem, North Carolina J. Math. Stat. 5 (2019) 17--27.}

\bibitem{Chang1999}
{G.J. Chang, L. Huang, X. Zhu, Circular chromatic numbers of Mycielski's graphs, Discrete Math. 205 (1999) 23--37.}

\bibitem{Chen2014}
{M. Chen, X. Guo, H. Li, L. Zhang, Total chromatic number of generalized Mycielski graphs, Discrete Math. 334 (2014) 48--51.}

\bibitem{Costa2019}
{E.R. Costa, V.L. Pessoa, R. Sampaio, R. Soares, PSPACE-hardness of two graph coloring games, Electron. Notes Theor. Comput. Sci. 346 (2019) 333--344.}

\bibitem{Enomoto2023}
{H. Enomoto, J. Fujisawa, N. Matsumoto, Game chromatic number of strong product graphs, Discrete Math. 346(1) (2023) 113162.}

\bibitem{Faigle1993}
{U. Faigle, U. Kern, H.A. Kierstead, W.T. Trotter, On the game chromatic number of some classes of graphs, Ars Combin. 35 (1993) 143--150.}

\bibitem{Fan2004}
{G. Fan, Circular chromatic number and Mycielski graphs, Combinatorica 24(1) (2004) 127--135.}


\bibitem{Frieze2013}
{A. Frieze, S. Haber, M. Lavrov, On the game chromatic number of sparse random graphs, SIAM J. Discrete Math. 27(2) (2013) 768--790.}

\bibitem{GuanZhu1999}
{D. Guan, X. Zhu, Game chromatic number of outerplanar graphs, J. Graph Theory 30 (1999) 67--70.}

\bibitem{Hollom2025}
{L. Hollom, On graphs with maximum difference between game chromatic number and chromatic number, Discrete Math. 348(2) (2025) 114271.}

\bibitem{Lam2003}
{P.C.B. Lam, W. Lin, G. Gu, Z. Song, Circular chromatic number and a generalization of the construction of Mycielski, J. Combin. Theory Ser. B 89(2) (2003)
195--205.}

\bibitem{Larsen1995}
{M. Larsen, J. Propp, D. Ullman, The fractional chromatic number of Mycielski's graphs, J. Graph Theory 19(3) (1995) 411--416.}

\bibitem{Laso2017}
{M. Laso\'{n}, The coloring game on matroids, Discrete Math. 340 (2017) 796--799.}

\bibitem{Lin2006}
{W. Lin, J. Wu, P.C.B. Lam, G. Gu, Several parameters of generalized Mycielskians, Discrete Appl. Math. 154(8) (2006) 1173--1182.}

\bibitem{Liu2004}
{D.F. Liu, Circular chromatic number for iterated Mycielski graphs, Discrete Math. 285 (2004) 335--340.}

\bibitem{Miskuf2009}
{J. Mi\v{s}kuf, R. \v{S}krekovski, M. Tancer, Backbone colorings and generalized Mycielski graphs, SIAM J. Discrete Math. 23(2) (2009) 1063--1070.}

\bibitem{Mycielski1955}
{J. Mycielski, Sur le coloriage des graphes, Colloq. Math. 3 (1955) 161--162.}


\bibitem{Zhuproduct2008}
{X. Zhu, Game coloring the Cartesian product of graphs, J. Graph Theory 59 (2008) 261--278.}

\bibitem{Zhu2008}
{X. Zhu, Refined activation strategy for the marking game, J. Combin. Theory Ser. B 98(1) (2008) 1--18.}









\end{thebibliography}
\end{document}